\documentclass[11pt,reqno]{amsart}
\usepackage{amsmath,amssymb,amsthm}
\usepackage[margin=1.1in]{geometry}
\usepackage{booktabs,array}
\usepackage{microtype}
\usepackage{tikz}
\usetikzlibrary{arrows.meta,positioning}
\usepackage[colorlinks=true,linkcolor=blue!60!black,citecolor=blue!60!black,urlcolor=blue!60!black]{hyperref}

\newtheorem{theorem}{Theorem}[section]
\newtheorem{lemma}[theorem]{Lemma}
\newtheorem{proposition}[theorem]{Proposition}
\newtheorem{corollary}[theorem]{Corollary}

\theoremstyle{definition}
\newtheorem{definition}[theorem]{Definition}
\theoremstyle{remark}
\newtheorem{remark}[theorem]{Remark}

\newcommand{\FF}{\mathcal{F}}
\newcommand{\NN}{\mathcal{N}}
\newcommand{\Z}{\mathbb{Z}}
\newcommand{\Q}{\mathbb{Q}}
\newcommand{\ord}{\operatorname{ord}}
\newcommand{\ORD}{\operatorname{ORD}}
\newcommand{\Tm}{\Theta_{-}}
\DeclareMathOperator{\Ggp}{\Gamma_{1}}

\title[Machine-Guided Recurrence Boundary Theory for Nahm Sums]{Machine-Guided Recurrence Boundary Theory for Nahm Sums}
\author{Ankush Goswami}
\address{School of Mathematical and Statistical Sciences, University of Texas Rio Grande Valley, Edinburg, TX 78541, USA}
\email{ankushgoswami3@gmail.com}
\date{August 2026}

\begin{document}

\begin{abstract}
We develop a proof-carrying method for affine families of positive-definite
Nahm sums. The method has three mathematical steps. First, one universal
coordinate-contiguous relation supplies exact algebraic building blocks for
finite certificates. Second, a tropical face-limit theorem identifies
parameter directions along which a higher-rank Nahm sum simplifies to a
lower-rank theta or theta--hypergeometric boundary value. Third, a
recurrence--boundary principle uses enough independent boundary values to
recover the initial sums of the family.

Machine learning and reinforcement learning are used only to search for
useful structure. Here ``machine learning'' means a finite symbolic search:
candidate recurrences and asymptotic rays are generated, scored using exact
algebraic data, and improved by evolutionary selection. ``Reinforcement
learning'' means a sequential proof search: at each step a $Q$-learning agent
chooses one legal contiguous-cell identity to add to the current residual,
with the goal of reducing that residual to zero. These learned searches do
not certify any theorem. Every successful output is replaced by an exact
symbolic identity that can be checked independently.

As the main application we prove the two product identities stated by Shi
and Wang as Conjecture~3.8 of arXiv:2607.23257 for the Nahm sums dual to
Zagier's twelfth rank-three example. Evolutionary symbolic search finds a
second-order recurrence for a one-parameter rank-three family, and a
formula-level $Q$-learning agent finds a five-cell proof of that recurrence.
A second evolutionary search finds two useful asymptotic rays. These rays
lead to a binary theta series and a unary theta series, and hence to two
linear equations for the two unknown initial sums. Solving this $2\times2$
system reduces the conjecture to two generalized-eta identities, which are
certified by the Robins--Frye--Garvan valence method on $\Ggp(300)$ and
$\Ggp(100)$. Consequently the dual of Zagier's twelfth example is modular,
and every member of the affine family is an explicit
$\Z[q,q^{-1}]$-combination of the two base products. Complete verification
artifacts accompany the paper, including independent implementations of the
main proof checks.
\end{abstract}

\subjclass[2020]{11F03, 11P84, 33D15, 68T05; Secondary 90C33}
\keywords{Nahm sums, modularity, Rogers--Ramanujan functions, theta
functions, generalized eta products, valence formula, tropical limits,
linear complementarity, computer-assisted proof, reinforcement learning}

\maketitle
\setcounter{tocdepth}{1}
\tableofcontents

\section{Introduction}\label{sec:intro}

\subsection{Nahm sums and Nahm's problem}
For a symmetric positive-definite matrix $A\in\Q^{r\times r}$, a vector
$B\in\Q^{r}$, and a scalar $C\in\Q$, the associated \emph{Nahm sum} is
\begin{equation}\label{eq:nahmsum}
  f_{A,B,C}(q)
  \;=\;
  \sum_{n=(n_1,\dots,n_r)\in\Z_{\ge0}^{r}}
  \frac{q^{\frac12 n^{T}An+B^{T}n+C}}
       {(q;q)_{n_1}\cdots(q;q)_{n_r}},
\end{equation}
where $(a;q)_n=\prod_{k=0}^{n-1}(1-aq^{k})$. Nahm's
problem~\cite{Nahm07,Zagier07} asks for which triples $(A,B,C)$ the
series \eqref{eq:nahmsum} is a modular function of $\tau$ (with
$q=e^{2\pi i\tau}$); the problem connects conformal field theory, the
Bloch group, and Rogers--Ramanujan-type identities, and one direction of
Nahm's conjecture was proved by Calegari, Garoufalidis, and
Zagier~\cite{CGZ23}. In rank one exactly seven Nahm sums are modular
\cite{Zagier07}. In ranks two and three Zagier compiled candidate lists
of eleven and twelve triples and proved several; the remaining cases were
settled through the work of several authors, with the rank-three list
completed by Wang~\cite{Wang24rank3}; see \cite{VZ11,Wang24rank2} for
history and rank-two analogues. Zagier further observed
\cite[p.~50]{Zagier07} that modularity appears to propagate along the
duality
\begin{equation}\label{eq:duality}
  \mathcal D:\;(A,B,C)\;\longmapsto\;
  \Bigl(A^{-1},\;A^{-1}B,\;\tfrac12 B^{T}A^{-1}B-\tfrac{r}{24}-C\Bigr).
\end{equation}
Shi and Wang~\cite{ShiWang26} investigated the Nahm sums dual to all
twelve rank-three examples and, combining their results with earlier
work, established the modularity of every dual except those of the ninth
and twelfth examples; for the twelfth they gave two precise conjectural
product identities, obtained from exact $5$-dissections
\cite[Conjecture~3.8]{ShiWang26}.

The central difficulty in this subject is usually not recognizing a
plausible product after computing many coefficients---Shi and Wang had
already done this---but finding a structural route from a coupled
multisum to a finite collection of modular objects. This paper develops
such a route for \emph{affine parameter families} and applies it to prove
the two Shi--Wang conjectures.

\subsection{The recurrence--boundary method}\label{subsec:method}
Directly evaluating a complicated multisum is often difficult. Instead, we
place the target sums inside a one-parameter family
\[
  \FF_t(q)=\NN_{A,m}(B_0+t\,d;\,q)
  \qquad(t\in\Z).
\]
The advantage is that neighboring members of the family may satisfy a simple
recurrence even when no single member is easy to sum directly. We therefore
look for three pieces of information:
\begin{itemize}
\item[\textbf{S1.}] a low-order linear recurrence relating nearby values of $\FF_t$;
\item[\textbf{S2.}] a short exact proof of that recurrence using the universal
  coordinate-contiguous relation;
\item[\textbf{S3.}] enough asymptotic directions to determine the initial
  values of the recurrence from simpler theta-valued limits.
\end{itemize}
Once these three pieces are known, the original product conjectures become a
finite system of modular-function identities and can be proved by valence
bounds.

The machine assistance is deliberately separated from the proof. An
evolutionary search proposes simple recurrence formulas and useful asymptotic
directions. A reinforcement-learning agent then searches for a short
sequence of legal contiguous-cell moves that proves the proposed recurrence.
Finally, exact algebra and modular theory check the resulting identities.
Thus the logical structure is
\[
  \text{ML proposes}\;\longrightarrow\;\text{RL plans}
  \;\longrightarrow\;\text{exact verification}.
\]

\subsection{General contributions}\label{subsec:contributions}
The general theory is developed in Section~\ref{sec:general}. Its main points
are the following.
\begin{itemize}
\item[\textbf{G1.}] We prove a universal contiguous relation for Nahm sums with
  common denominator base $(q^{m};q^{m})_{n_i}$. It gives a collection of
  exact local identities, called \emph{cells}. Any finite combination of
  cells that cancels formally gives a genuine identity of Nahm sums
  (Theorem~\ref{thm:contig} and Proposition~\ref{prop:soundness}).
\item[\textbf{G2.}] We prove a \emph{tropical face-limit theorem}
  (Theorem~\ref{thm:tropical}). It tells us what happens when the parameter
  moves far along a suitable affine ray: some summation variables become
  bilateral, some remain unilateral, and some are forced to a boundary
  value. The original sum then reduces to a lower-rank theta or
  theta--hypergeometric expression.
\item[\textbf{G3.}] We prove a \emph{recurrence--boundary determination
  principle} (Theorem~\ref{thm:determination}). An order-$s$ recurrence,
  together with $s$ independent boundary measurements, determines the first
  $s$ members of the family.
\item[\textbf{G4.}] We reduce the final finite system to generalized-eta
  identities, which can be certified by cusp orders and the valence formula
  (Corollary~\ref{cor:modular-cert}).
\item[\textbf{G5.}] We give a \emph{proof-carrying ML/RL implementation}.
  The learned algorithms help find good candidates and short proof paths,
  but exact symbolic verification---not the learned model---establishes every
  theorem (Sections~\ref{subsec:where-ml} and~\ref{sec:ml}).
\end{itemize}

\subsection{Application: the dual of Zagier's twelfth example}
\label{subsec:main}
For $m\ge 1$ and $0<a<m$, write
\begin{equation}\label{eq:Jdef}
  J_m=(q^{m};q^{m})_\infty,
  \qquad
  J_{a,m}=(q^{a},q^{m-a},q^{m};q^{m})_\infty ,
\end{equation}
with $(a_1,\dots,a_s;q)_\infty=\prod_i (a_i;q)_\infty$. Set
\begin{equation}\label{eq:ABdef}
  A=\begin{pmatrix}3&-4&-3\\-4&7&4\\-3&4&8\end{pmatrix},
  \qquad
  B_t=\begin{pmatrix}\tfrac12+2t\\[2pt] -\tfrac32-t\\[2pt] 2-2t\end{pmatrix}
  \qquad(t\in\Z),
\end{equation}
and define the affine family
\begin{equation}\label{eq:Ftdef}
  \FF_t(q)
  \;=\;
  \sum_{i,j,k\ge0}
  \frac{q^{\frac12(i,j,k)A(i,j,k)^{T}+B_t^{T}(i,j,k)}}
       {(q^{5};q^{5})_i\,(q^{5};q^{5})_j\,(q^{5};q^{5})_k}.
\end{equation}
The matrix $A$ is positive definite with $\det A=25$. The two sums of
Shi--Wang's Conjecture~3.8 are $\FF_0$ and $\FF_1$; here and throughout
we normalize without the additive constant $C$ of \eqref{eq:nahmsum},
since the prefactor $q^{C}$ is immaterial to the identities and to
modularity.

\begin{theorem}[Shi--Wang's two product conjectures]
\label{thm:main}
As identities of formal power series in $q$, equivalently for $|q|<1$,
\begin{align}
\FF_0(q)&=
  2\,\frac{J_{50}^{11}}
          {J_{5,50}^{4}J_{10,50}J_{15,50}^{3}J_{20,50}^{2}J_{25}}
  \;+\;q\,\frac{J_{50}^{11}J_{10,50}}
          {J_{5,50}^{5}J_{15,50}^{2}J_{20,50}^{4}J_{25}}
  \;+\;4q^{2}\,\frac{J_{50}^{12}}
          {J_{5,50}^{3}J_{10,50}^{2}J_{15,50}^{3}J_{20,50}J_{25}^{3}},
  \label{eq:mainF0}\\[4pt]
\FF_1(q)&=
  \frac{J_{20,50}J_{50}^{11}}
       {J_{5,50}^{2}J_{10,50}^{4}J_{15,50}^{5}J_{25}}
  \;+\;2q\,\frac{J_{50}^{11}}
       {J_{5,50}^{3}J_{10,50}^{2}J_{15,50}^{4}J_{20,50}J_{25}}
  \;+\;4q^{4}\,\frac{J_{50}^{12}}
       {J_{5,50}^{3}J_{10,50}J_{15,50}^{3}J_{20,50}^{2}J_{25}^{3}}.
  \label{eq:mainF1}
\end{align}
\end{theorem}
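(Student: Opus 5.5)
The plan follows the recurrence--boundary method of Section~\ref{subsec:method}. First I will establish a second-order recurrence for the affine family \eqref{eq:Ftdef},
\[
  \FF_{t+1}=a_t(q)\,\FF_t+b_t(q)\,\FF_{t-1},\qquad a_t,b_t\in\Z[q,q^{-1}],
\]
where the explicit coefficients are the ones proposed by the evolutionary search. The proof will be a finite combination of cells from the universal contiguous relation (Theorem~\ref{thm:contig}) with denominator base $(q^5;q^5)$. Concretely, I will take the five cells found by the $Q$-learning agent, sum them with their coefficients, and check that the residual cancels formally as a Laurent-polynomial combination of shifted summands. Proposition~\ref{prop:soundness} then turns this cancellation into a genuine identity of Nahm sums, valid for all $t\in\Z$. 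The recurrence has an invertible leading coefficient, so every $\FF_t$ is a $\Z[q,q^{-1}]$-combination of $\FF_0$ and $\FF_1$. This already gives the final sentence of the abstract once the main identities are proved.

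Next I will pin down $\FF_0,\FF_1$ by two boundary measurements. Along each of the two asymptotic rays $t\to\pm\infty$ (suitably rescaled), I will apply the tropical face-limit theorem (Theorem~\ref{thm:tropical}). It identifies which of $i,j,k$ become bilateral after the shift $n\mapsto n+t\,v$, which stay unilateral, and which are frozen at a boundary value. It then computes the limit of $q^{-e(t)}\FF_t$ after the appropriate normalization. On one ray the limit is a binary theta series (two bilateral variables, $1/(q^5;q^5)_\infty$ factors absorbed), and on the other it is a unary theta series. Iterating the recurrence expresses $\FF_t=U_t\FF_1+V_t\FF_0$ with explicit $U_t,V_t$. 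Their leading tropical behaviour along each ray gives two linear relations
\[
  \alpha_1\FF_1+\beta_1\FF_0=\Theta^{(1)},\qquad
  \alpha_2\FF_1+\beta_2\FF_0=\Theta^{(2)},
\]
with $\Theta^{(i)}$ the theta limits. By Theorem~\ref{thm:determination} the determinant $\alpha_1\beta_2-\alpha_2\beta_1$ is nonzero, and I will verify this from lowest-order terms. Solving by Cramer's rule writes $\FF_0$ and $\FF_1$ as explicit combinations of a binary theta function and a unary theta function. The binary theta function I would first try to factor via a standard product decomposition (e.g.\ a quadratic form of discriminant related to $25$), into eta quotients or sums of products of $J_{a,50}$.

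Finally, \eqref{eq:mainF0} and \eqref{eq:mainF1} become two identities among generalized eta quotients on $\Ggp(300)$ and $\Ggp(100)$. Following Corollary~\ref{cor:modular-cert}, I will divide by one term to get modular functions. Then I will compute orders at all cusps by Robins' formula and bound the total pole order. Finally, I will check that enough coefficients agree (Sturm/valence bound) to conclude that the identities hold.

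I expect the main obstacle to be the boundary step. Normalizing the ray limits correctly requires tracking the exponent $e(t)$, the exact face (including a possibly nonunique tropical minimum), and the leading coefficients of $U_t,V_t$, so that the two linear equations are exact rather than leading-order. If a face is degenerate, I would first try perturbing the ray direction within the cone suggested by the search until the minimizing face is a single lattice coset.
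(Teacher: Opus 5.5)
You have the paper's architecture: a five-cell certificate for the recurrence, one ray in each direction, Theorem~\ref{thm:determination}, and valence certificates on $\Ggp(300)$ and $\Ggp(100)$. The gap is the step you yourself flag as the main obstacle, which your proposal leaves without the idea that makes it work.

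\textbf{The coefficient limits.} The coefficients $\alpha_\nu,\beta_\nu$ are not read off from the ``leading tropical behaviour'' of $U_t,V_t$. They are coefficientwise limits of the gauged fundamental solutions, so they are full power series. For the final valence step, each of them must be identified as a generalized-eta quotient. The paper does this in three stages.
\begin{itemize}
\item It applies the quadratic gauge $G_t=q^{t(3t-1)/2}\FF_t$, $H_n=G_{-n}$ (Lemma~\ref{lem:gauge}). This turns \eqref{eq:rec} into $H_n=H_{n-1}+Q^{n-1}H_{n-2}$ and makes the gauge coincide with the tropical normalizer.
\item Forward, the fundamental solutions are Schur polynomials. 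They converge to the Rogers--Ramanujan products $G(Q)$ and $H(Q)$.
\item Backward, after a parity split into $E_m,O_m$, they are the Gaussian-binomial continuants of Lemma~\ref{lem:continuant}. Their limits are Slater's sums \eqref{eq:S79}--\eqref{eq:S96}, i.e.\ $D(Q)$ and $S(Q)$.
\end{itemize}
Only after this is $L$ in \eqref{eq:system} a matrix of eta quotients to which the valence method can apply. Also, Theorem~\ref{thm:determination} assumes that $\det L$ is invertible rather than proving it; the paper proves it separately in Lemma~\ref{lem:unit}.

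\textbf{The degenerate-ray fallback.} Your contingency would fail. The boundary measurements must be taken on members of the family, and the family moves only along $d$, so the direction cannot be perturbed. The only freedom is an arithmetic subfamily $t=ah+b$ (Remark~\ref{rem:subfamily}), and that freedom is genuinely needed here. For $\delta=d$ the complementarity problem has a unique solution because $A$ is positive definite, and that solution is $v=e_3/4$, which is not integral. So Theorem~\ref{thm:tropical} applies only along $t=4h+1$, with $\delta=4d$, $v=e_3$, $w=(5,0,0)$, matched with $O_{2h}=q^{4h^{2}+1}\FF_{4h+1}$.

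\textbf{The unilateral sums.} You also have to sum the residual unilateral variables before reaching $\Omega(Q)\Tm/(Q;Q)_\infty^{2}$ and $q\,\Omega(Q)\vartheta/(Q;Q)_\infty$. In the forward case this is Euler's identity; in the backward case it is a parity split in $j$ followed by Euler's identity at $z=\pm1$.

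\textbf{Cramer's rule versus $LP=\Theta$.} Cramer's rule would put both theta series and $\det L$ into each identity. The paper instead verifies the two product-side equations $LP=\Theta$ of Proposition~\ref{prop:products} directly. That is why one certificate lives on $\Ggp(300)$ (the $\Tm$ equation) and the other on $\Ggp(100)$ (the $\vartheta$ equation).
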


Each summand on the right is, up to a power of $q$, a quotient of
generalized Dedekind eta functions, so Theorem~\ref{thm:main} resolves
\cite[Conjecture~3.8]{ShiWang26} and yields:

\begin{corollary}\label{cor:modular}
The Nahm sums dual to Zagier's twelfth rank-three example are modular.
Moreover, the recurrence of Theorem~\ref{thm:rec} propagates modularity
through the entire affine family: for every $t\in\Z$,
\[
  \FF_t\;\in\;\Z[q,q^{-1}]\,\FF_0+\Z[q,q^{-1}]\,\FF_1 ,
\]
with coefficients computable by a two-term matrix iteration
(Section~\ref{sec:consequences}); for instance $\FF_2=\FF_0-q\FF_1$.
\end{corollary}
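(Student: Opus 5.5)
The corollary has two parts, and I will handle them separately. The first part is modularity of the dual sums. The dual of Zagier's twelfth example consists of the sums $\FF_0$ and $\FF_1$, up to the factor $q^{C}$ and the substitution relating the base $q^5$ in \eqref{eq:Ftdef} to the normalized form \eqref{eq:nahmsum}. By Theorem~\ref{thm:main}, each of these equals a sum of three terms. Each term is a power of $q$ times a quotient of the products $J_{a,50}$ and $J_{25},J_{50}$. Each $J_{a,50}$ is, up to a rational power of $q$, a generalized Dedekind eta function $\eta_{50,a}$, and $J_m$ is $q^{-m/24}\eta(m\tau)$. So after multiplying by a suitable $q^{\lambda}$, every summand is a generalized eta quotient.

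Next I would check that these eta quotients carry a common rational power of $q$ across the three summands. This reduces to a finite computation of weight and $q$-exponent. Once that holds, a single normalization $q^{\lambda}\FF_0$ is a sum of eta quotients of weight $0$, which are modular functions on some $\Gamma_1(N)$. This is the standard Robins theorem, presumably already invoked in Corollary~\ref{cor:modular-cert} on $\Ggp(300)$ and $\Ggp(100)$. The same argument applies to $\FF_1$. This gives modularity of the dual Nahm sums.

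The second part is propagation through the family. Here I use the second-order recurrence of Theorem~\ref{thm:rec}, which I expect has the shape
\[
\FF_{t+2}=a(q)\,\FF_{t+1}+b(q)\,\FF_t ,\qquad a,b\in\Z[q,q^{-1}],
\]
with $b$ a unit (a signed monomial), as suggested by $\FF_2=\FF_0-q\FF_1$. Writing $v_t=(\FF_t,\FF_{t+1})^T$, we get $v_{t+1}=M v_t$ with $M=\begin{pmatrix}0&1\\ b&a\end{pmatrix}$. Its determinant $-b$ is a unit of $\Z[q,q^{-1}]$, so $M^{-1}$ also has Laurent-polynomial entries. Induction in both directions from $v_0$ then shows that $\FF_t\in\Z[q,q^{-1}]\FF_0+\Z[q,q^{-1}]\FF_1$ for all $t\in\Z$. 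The coefficients are the first row of $M^{t}$, which is the promised two-term iteration. Specializing at $t=0$ recovers $\FF_2=\FF_0-q\FF_1$, provided $a=-q$ and $b=1$.

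The main obstacle is whether the recurrence really has unit leading and trailing coefficients; without that, negative $t$ would only give rational-function coefficients. If $b$ is not a monomial, I would check whether the recurrence can be normalized by re-indexing so that both extreme coefficients are units. Failing that, I would restrict the claim to $t\ge0$. A secondary check is that the $q$-exponent offsets match, so that the $\Z[q,q^{-1}]$-combinations make each $\FF_t$ modular up to a power of $q$.
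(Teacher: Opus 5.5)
Your route is the paper's: modularity of the dual sums is read off from Theorem~\ref{thm:main}, and the membership $\FF_t\in\Z[q^{\pm1}]\FF_0+\Z[q^{\pm1}]\FF_1$ comes from running the order-two recurrence forwards and backwards with Laurent-polynomial transfer matrices. Your unit criterion is the right one, but the recurrence you guessed is wrong. Theorem~\ref{thm:rec} is not autonomous; it reads $\FF_{t+2}=q^{-t}\FF_t-q^{2t+1}\FF_{t+1}$. A constant-coefficient version with $a=-q$, $b=1$ already fails at $t=1$: $\FF_3$ has principal part $q^{-1}$, while $\FF_1-q\FF_2\in\Z[[q]]$. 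So the step matrix is $M_t=\bigl(\begin{smallmatrix}0&1\\ q^{-t}&-q^{2t+1}\end{smallmatrix}\bigr)$, and the coefficients come from $M_{t-1}\cdots M_0$ (or from products of the $M_t^{-1}$ when $t<0$), not from $M^{t}$. Since $\det M_t=-q^{-t}$ is a unit and $M_t^{-1}=\bigl(\begin{smallmatrix}q^{3t+1}&q^{t}\\ 1&0\end{smallmatrix}\bigr)$, i.e.\ $\FF_{t-1}=q^{3t-2}\FF_t+q^{t-1}\FF_{t+1}$, your ``main obstacle'' never arises and no restriction to $t\ge0$ is needed. This also shows that the entry $q^{2t-1}$ in the backward matrix of Corollary~\ref{cor:family} should read $q^{3t-2}$: at $t=0$ only the latter agrees with $q^{2}\FF_{-1}=\FF_0+q\FF_1$ from Lemma~\ref{lem:gauge}.

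Two further corrections. First, in the modularity step, $J_{a,50}=(q^{a},q^{50-a},q^{50};q^{50})_\infty$ contains the factor $J_{50}$. Hence $J_{a,50}=q^{-25P_2(a/50)-50/24}\,\eta_{50;a}(\tau)\,\eta(50\tau)$ has weight $\tfrac12$, not $0$. If you count it as a bare $\eta_{50;a}$, the three summands of \eqref{eq:mainF0} appear to have weights $5,5,\tfrac92$, and your weight check would wrongly fail. With the correct count, all six summands of \eqref{eq:mainF0}--\eqref{eq:mainF1} have weight $0$ and the same offset, so $q^{1/8}\FF_0$ and $q^{1/8}\FF_1$ are sums of weight-zero eta quotients.

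Second, drop your ``secondary check'' that each $\FF_t$ is modular up to a power of $q$; it cannot succeed. The function $q$, and hence each $q^{1/b}$, is transcendental over the field of modular functions of any finite-index subgroup. Now $q^{\mu}\FF_2=q^{\mu-1/8}\bigl(q^{1/8}\FF_0\bigr)-q^{\mu+7/8}\bigl(q^{1/8}\FF_1\bigr)$ combines two nonzero modular functions with distinct powers of $q$, so it is not modular for any rational $\mu$. Thus ``propagates modularity'' can only mean the displayed membership, which is what your argument and the paper's actually establish; the words ``hence modular'' in Corollary~\ref{cor:family} overstate this.
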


The first coefficients are
\[
\FF_0=2+q+4q^{2}+8q^{5}+5q^{6}+12q^{7}+22q^{10}+14q^{11}+32q^{12}+\cdots,
\quad
\FF_1=1+2q+4q^{4}+2q^{5}+6q^{6}+12q^{9}+\cdots ;
\]
the supports (exponents $\equiv 0,1,2 \pmod 5$ for $\FF_0$ and
$\equiv0,1,4\pmod 5$ for $\FF_1$) reflect the three-term $5$-dissections
on the product side.

The proof follows the general method above. The machine first suggests a
recurrence, and the paper then proves that recurrence exactly with five
contiguous cells. Because the recurrence has order two, every $\FF_t$ is
controlled by two initial values, $\FF_0$ and $\FF_1$. Two machine-selected
asymptotic rays give two independent equations for these two unknowns. The
corresponding limits are the binary theta series $\Tm$ and the unary theta
series $\vartheta$. Solving the resulting $2\times2$ system reduces
Theorem~\ref{thm:main} to two generalized-eta identities, which are then
proved by valence bounds.

\begin{center}
\resizebox{0.97\textwidth}{!}{%
\begin{tikzpicture}[
  box/.style={
    draw,
    rounded corners=2pt,
    align=center,
    inner sep=4pt,
    font=\small,
    minimum height=10mm
  },
  lab/.style={
    font=\scriptsize\itshape,
    fill=white,
    inner sep=1pt
  },
  arr/.style={
    -{Stealth[length=2.2mm]},
    thick
  }
]


\node[box, text width=2.6cm] (fam)
at (0,0)
{affine family\\ $\FF_t$};

\node[box, text width=2.8cm] (cand)
at (4.0,0)
{candidate\\ recurrence};

\node[box, text width=3.0cm] (cert)
at (8.2,0)
{five-cell\\ certificate};

\node[box, text width=4.5cm] (rec)
at (13.3,0)
{exact recurrence\\
$q^{-t}\FF_t=q^{2t+1}\FF_{t+1}+\FF_{t+2}$};

\draw[arr] (fam) -- node[lab,above]{ML search} (cand);
\draw[arr] (cand) -- node[lab,above]{RL search} (cert);
\draw[arr] (cert) -- node[lab,above]{exact check} (rec);


\node[box, text width=4.0cm] (fib)
at (8.2,-2.1)
{quadratic gauge and $q$-Fibonacci\\
$H_n=H_{n-1}+Q^{n-1}H_{n-2}$};

\node[box, text width=3.3cm] (fwd)
at (3.0,-4.1)
{forward ray $L_-$\\
binary theta $\Tm$};

\node[box, text width=3.3cm] (bwd)
at (13.4,-4.1)
{backward ray $L_+$\\
unary theta $\vartheta$};

\draw[arr] (rec.south) |- node[lab,pos=0.25,right]{gauge} (fib.east);

\draw[arr] (fib.south west) -- node[lab,above left]{forward limit} (fwd.north east);
\draw[arr] (fib.south east) -- node[lab,above right]{backward limit} (bwd.north west);


\node[box, text width=4.0cm] (sys)
at (8.2,-4.1)
{$2\times2$ boundary system\\
for $(\FF_0,\FF_1)$};

\node[box, text width=4.0cm] (val)
at (8.2,-6.3)
{generalized-eta identities\\
+ exact valence certificates};

\node[box, text width=4.0cm] (prod)
at (8.2,-8.4)
{Shi--Wang product formulas\\
\eqref{eq:mainF0}--\eqref{eq:mainF1}};

\draw[arr] (fwd.east) -- (sys.west);
\draw[arr] (bwd.west) -- (sys.east);

\draw[arr] (sys) -- node[lab,right]{solve} (val);
\draw[arr] (val) -- node[lab,right]{valence} (prod);

\end{tikzpicture}%
}
\end{center}

\subsection{What the machine does and what is proved exactly}
\label{subsec:epistemics}
Machine learning is used for discovery, not for mathematical certification.
The evolutionary searches suggest a recurrence and two asymptotic rays. A
formula-level $Q$-learning agent then searches for a short sequence of legal
contiguous-cell identities that proves the recurrence. After these objects
are found, the learned searches are no longer needed: the recurrence is
verified as an exact identity in a free module over
$\Z[q^{\pm1},q^{\pm t}]$, the two boundary limits follow from
Theorem~\ref{thm:tropical}, and the final product identities follow from
exact cusp-order arithmetic and the valence formula. No floating-point
value of $q$ is used in the proof. Section~\ref{sec:ml} explains the search
algorithms in elementary mathematical terms, and
Section~\ref{sec:verification} describes the independent verification
artifacts.

\subsection{Outline}
Section~\ref{sec:prelim} fixes notation and the series topology.
Section~\ref{sec:general} develops the general recurrence--boundary
framework (G1--G4). Sections~\ref{sec:recurrence}--\ref{sec:certificates}
carry out the application: the recurrence and its certificate
(Section~\ref{sec:recurrence}), the quadratic gauge and two-sided
dynamics (Section~\ref{sec:gauge}), the two tropical rays
(Sections~\ref{sec:forward}--\ref{sec:backward}), the boundary system and
the reduction of Theorem~\ref{thm:main}
(Section~\ref{sec:system}), and the valence certificates
(Section~\ref{sec:certificates}). Section~\ref{sec:ml} describes the
machine-learning implementation, Section~\ref{sec:verification} the
verification artifacts, Section~\ref{sec:consequences} consequences and
the remaining Shi--Wang targets. Appendix~\ref{app:exponents} collects the
exact exponent identities, Appendix~\ref{app:base} the common-base
conversion, and Appendix~\ref{app:artifacts} the artifact inventory.

A reader interested only in the application can read
Sections~\ref{sec:prelim} and~\ref{sec:recurrence}--\ref{sec:certificates},
taking the two boundary evaluations from the self-contained direct
arguments given there; a reader interested in the general method should
start with Section~\ref{sec:general}.

\section{Formal series, topology, and auxiliary products}
\label{sec:prelim}

\subsection{Series and coefficientwise convergence}\label{subsec:topology}
Fix $M\ge1$ and work in the field
$K=\Q((q^{1/M}))$ of formal Puiseux--Laurent series with bounded
denominators; in the application $M=1$ and all series lie in
$\Z((q))\otimes\Q$. We write $[q^{x}]f$ for the coefficient of $q^{x}$.

\begin{definition}[Coefficientwise convergence]\label{def:topology}
A sequence $(f_h)_{h\ge0}$ in $K$ \emph{converges coefficientwise} to
$f\in K$ if (i) the orders $\ord_q f_h$ are bounded below uniformly in
$h$, and (ii) for every exponent $x$ the coefficient $[q^{x}]f_h$ is
eventually constant, equal to $[q^{x}]f$.
\end{definition}

With this notion, sums and products of convergent sequences converge to
the corresponding sums and products: for products, a fixed coefficient of
$f_hg_h$ involves only the finitely many coefficients of $f_h$ and $g_h$
in a window determined by the uniform lower bounds, each of which is
eventually constant. All limits in this paper are coefficientwise limits
in this sense.

Positive definiteness enters through the following standard facts, which
we use silently: a positive-definite quadratic $\tfrac12u^{T}Au$ plus a
linear form is bounded below on $\Z^{r}$ and proper, so its sublevel sets
are finite; consequently every Nahm-type sum below has a finite principal
part and well-defined coefficients.

\begin{lemma}[Stabilization]\label{lem:stab}
Let $Q=q^{m}$. For every $n\ge0$,
\[
  \frac{1}{(Q;Q)_n}-\frac{1}{(Q;Q)_\infty}\;\in\;Q^{n+1}\,\Z[[Q]],
\]
i.e.\ the two series agree through $q^{m(n+1)-1}$. Likewise, for
$0\le 2r\le n$, the Gaussian binomial
$\genfrac{[}{]}{0pt}{1}{n-r}{r}_Q$
agrees with $\frac{1}{(Q;Q)_r}$ through $Q^{\,n-2r}$.
\end{lemma}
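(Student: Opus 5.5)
For the first claim, expand $1/(Q;Q)_n$ and $1/(Q;Q)_\infty$ as partition generating functions. By the standard combinatorial interpretation, $1/(Q;Q)_n=\sum_k p_{\le n}(k)Q^k$, where $p_{\le n}(k)$ counts partitions of $k$ into parts of size at most $n$. Likewise $1/(Q;Q)_\infty=\sum_k p(k)Q^k$. Their difference is
\[
\frac{1}{(Q;Q)_\infty}-\frac{1}{(Q;Q)_n}=\sum_k \bigl(p(k)-p_{\le n}(k)\bigr)Q^k .
\]
Its coefficients count partitions of $k$ having at least one part exceeding $n$. Such partitions exist only for $k\ge n+1$, and the counts are nonnegative integers, so the difference lies in $Q^{n+1}\Z[[Q]]$.

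Alternatively, and more algebraically, write
\[
\frac{1}{(Q;Q)_n}-\frac{1}{(Q;Q)_\infty}=\frac{1}{(Q;Q)_n}\Bigl(1-\frac{1}{(Q^{n+1};Q)_\infty}\Bigr).
\]
Then observe that $1/(Q^{n+1};Q)_\infty\in 1+Q^{n+1}\Z[[Q]]$ and $1/(Q;Q)_n\in\Z[[Q]]$. Since $Q=q^m$, agreement modulo $Q^{n+1}$ means agreement through $q^{m(n+1)-1}$.

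For the Gaussian binomial, set $N=n-r$, so $N-r=n-2r\ge0$. Use the product form
\[
\genfrac{[}{]}{0pt}{1}{N}{r}_Q=\frac{(Q^{N-r+1};Q)_r}{(Q;Q)_r}.
\]
The numerator $\prod_{j=1}^{r}(1-Q^{N-r+j})$ lies in $1+Q^{N-r+1}\Z[Q]$. Multiplying by $1/(Q;Q)_r\in\Z[[Q]]$ therefore gives
\[
\genfrac{[}{]}{0pt}{1}{n-r}{r}_Q-\frac{1}{(Q;Q)_r}\in Q^{\,n-2r+1}\Z[[Q]],
\]
which is agreement through $Q^{n-2r}$.

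The only delicate point is the edge case $r=0$. There both sides equal $1$ and the claim is trivial. The product formula also remains valid when $N-r=0$: it gives $(Q;Q)_r/(Q;Q)_r=1$, and the stated bound says only that the two sides agree through $Q^0$, where both have constant term $1$. No real obstacle is expected; the main care is to state precisely what ``agrees through'' means, namely that the difference has $q$-order exceeding the stated exponent.
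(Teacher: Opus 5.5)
Your proposal is correct and essentially the paper's own argument. The paper likewise isolates the tail factor, writing $1/(Q;Q)_n=(Q^{n+1};Q)_\infty/(Q;Q)_\infty$ with $(Q^{n+1};Q)_\infty\in 1+Q^{n+1}\Z[[Q]]$, and it uses the same product form $\genfrac{[}{]}{0pt}{1}{n-r}{r}_Q=(Q^{n-2r+1};Q)_r/(Q;Q)_r$; your partition-counting argument and edge-case remarks are accurate extras.
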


\begin{proof}
$\frac{1}{(Q;Q)_n}=\frac{(Q^{n+1};Q)_\infty}{(Q;Q)_\infty}$ with
$(Q^{n+1};Q)_\infty\in 1+Q^{n+1}\Z[[Q]]$, and
$\genfrac{[}{]}{0pt}{1}{n-r}{r}_Q
=\frac{(Q^{n-2r+1};Q)_{r}}{(Q;Q)_{r}}$
with $(Q^{n-2r+1};Q)_{r}\in 1+Q^{\,n-2r+1}\Z[[Q]]$.
\end{proof}

\subsection{Auxiliary products for the application}
\label{subsec:products}
Throughout the application,
\begin{equation}\label{eq:Qdef}
  Q=q^{5},
  \qquad
  j(x;R)=(x,\,R/x,\,R;\,R)_\infty .
\end{equation}
We use the Rogers--Ramanujan functions, in the variable $Q$,
\begin{equation}\label{eq:GH}
  G(Q)=\sum_{r\ge0}\frac{Q^{r^{2}}}{(Q;Q)_r}
      =\frac{1}{(Q,Q^{4};Q^{5})_\infty},
  \qquad
  H(Q)=\sum_{r\ge0}\frac{Q^{r^{2}+r}}{(Q;Q)_r}
      =\frac{1}{(Q^{2},Q^{3};Q^{5})_\infty},
\end{equation}
the product forms being the Rogers--Ramanujan identities
\cite{Andrews98}, together with
\begin{equation}\label{eq:OmDS}
  \Omega(Q)=(-Q;Q)_\infty,
  \qquad
  D(Q)=\Omega(Q)\,G(Q^{4}),
  \qquad
  S(Q)=\Omega(Q)\,H(Q^{4}),
\end{equation}
so that, in the $J$-notation \eqref{eq:Jdef} with base variable $q$,
\[
  G(Q)=\frac{J_{25}}{J_{5,25}},\quad
  H(Q)=\frac{J_{25}}{J_{10,25}},\quad
  \Omega(Q)=\frac{J_{10}}{J_{5}},\quad
  D(Q)=\frac{J_{10}J_{100}}{J_{5}J_{20,100}},\quad
  S(Q)=\frac{J_{10}J_{100}}{J_{5}J_{40,100}}.
\]
We also use the unary theta function
\begin{equation}\label{eq:vartheta}
  \vartheta(q)=\sum_{n\in\Z}q^{n^{2}}=\frac{J_2^{5}}{J_1^{2}J_4^{2}}
\end{equation}
and the binary theta series
\begin{equation}\label{eq:ThetaM}
  \Tm(q)=\sum_{R,S\in\Z}q^{(3R^{2}-8RS+R+7S^{2}-3S)/2},
\end{equation}
whose quadratic part has Gram matrix
$\bigl(\begin{smallmatrix}3&-4\\-4&7\end{smallmatrix}\bigr)$ of
determinant $5$; the exponent in \eqref{eq:ThetaM} is a nonnegative
integer for all $(R,S)\in\Z^{2}$, so $\Tm\in\Z[[q]]$.

\section{The general recurrence--boundary framework}
\label{sec:general}

\subsection{Affine Nahm sums}\label{subsec:affine}
Fix a positive integer $m$ and write $Q=q^{m}$. Let $A$ be a symmetric
positive-definite rational $r\times r$ matrix. For $B\in\Q^{r}$ define
\begin{equation}\label{eq:NAm}
  \NN_{A,m}(B;q)
  =\sum_{n\in\Z_{\ge0}^{r}}
  \frac{q^{\frac12 n^{T}An+B^{T}n}}
       {\prod_{j=1}^{r}(Q;Q)_{n_j}}
  \;\in\;K .
\end{equation}
Positive definiteness gives coefficientwise finiteness and a finite
principal part, as in Section~\ref{subsec:topology}.

\subsection{The contiguous-cell module}\label{subsec:cells}
Let $e_j$ denote the $j$-th coordinate vector and $Ae_j$ the $j$-th
column of $A$.

\begin{theorem}[Universal coordinate-contiguous relation]
\label{thm:contig}
For every $B\in\Q^{r}$ and $1\le j\le r$,
\begin{equation}\label{eq:contig}
  \NN_{A,m}(B)-\NN_{A,m}(B+me_j)
  \;=\;q^{\,B_j+A_{jj}/2}\,\NN_{A,m}(B+Ae_j).
\end{equation}
\end{theorem}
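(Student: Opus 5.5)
The plan is to compare the two sums on the left of \eqref{eq:contig} term by term and identify their difference with a shifted Nahm sum. Write $Q=q^{m}$ and let $w_B(n)=q^{\frac12 n^{T}An+B^{T}n}/\prod_i (Q;Q)_{n_i}$ be the summand of $\NN_{A,m}(B)$. Replacing $B$ by $B+me_j$ multiplies the summand by $q^{mn_j}=Q^{n_j}$. Hence
\[
  \NN_{A,m}(B)-\NN_{A,m}(B+me_j)=\sum_{n\in\Z_{\ge0}^{r}}(1-Q^{n_j})\,w_B(n).
\]
This rearrangement is legitimate in $K$: both series converge coefficientwise, since for each exponent only finitely many $n$ contribute by positive definiteness, as recalled in Section~\ref{subsec:topology}. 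Termwise subtraction is therefore valid.

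Next we use the elementary identity $(1-Q^{n_j})/(Q;Q)_{n_j}=1/(Q;Q)_{n_j-1}$ for $n_j\ge1$. The terms with $n_j=0$ vanish, because the factor $1-Q^{0}$ is zero. We reindex the remaining terms by $n=n'+e_j$ with $n'\in\Z_{\ge0}^{r}$. The denominator then becomes $\prod_i (Q;Q)_{n'_i}$.

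For the exponent, symmetry of $A$ gives
\[
  \tfrac12(n'+e_j)^{T}A(n'+e_j)+B^{T}(n'+e_j)
  =\tfrac12 n'^{T}An'+(B+Ae_j)^{T}n'+B_j+\tfrac12A_{jj}.
\]
The sum is therefore $q^{B_j+A_{jj}/2}\,\NN_{A,m}(B+Ae_j)$, which is the right side of \eqref{eq:contig}.

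None of these steps is a real obstacle. The only point requiring care is the justification of the formal manipulations. Each of the three series is a well-defined element of $K$: its exponents lie in a fixed lattice $\frac1M\Z$ with bounded-below order, and each coefficient is a finite sum. Reindexing is a bijection of index sets that preserves each coefficient's finite sum, so the identity holds coefficientwise.
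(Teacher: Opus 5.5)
Your proposal is correct and follows the same route as the paper's proof: you multiply the summand by $1-Q^{n_j}$, use $(1-Q^{n_j})/(Q;Q)_{n_j}=1/(Q;Q)_{n_j-1}$, discard the $n_j=0$ terms, shift $n_j\mapsto n_j+1$, and read off the exponent shift $B_j+A_{jj}/2+(Ae_j)^{T}n'$. You also justify the manipulations in $K$ by the finiteness of each coefficient, as the paper does.
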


\begin{proof}
Since $(B+me_j)^{T}n=B^{T}n+mn_j$, the $n$-th summand of the difference
is the $n$-th summand of $\NN_{A,m}(B)$ multiplied by
$1-q^{mn_j}=1-Q^{n_j}$. Using
$(1-Q^{n_j})/(Q;Q)_{n_j}=1/(Q;Q)_{n_j-1}$ for $n_j\ge1$, the terms with
$n_j=0$ vanish; shifting $n_j\mapsto n_j+1$ in the remaining sum
increases the exponent by $B_j+A_{jj}/2+(Ae_j)^{T}n$ and produces the
right side. At any fixed power of $q$ only finitely many lattice points
appear on either side, so the manipulation is legitimate in $K$.
\end{proof}

Introduce formal symbols $[B]$ indexed by parameter vectors and define
the \emph{cell}
\begin{equation}\label{eq:cell-general}
  C_j(B)\;=\;[B]-[B+me_j]-q^{\,B_j+A_{jj}/2}\,[B+Ae_j].
\end{equation}
Let $\mathcal C_{A,m}$ be the submodule generated by all cells inside the
free module on the symbols $[B]$ over the ring of Laurent polynomials in
the relevant powers of $q$.

\begin{proposition}[Certificate soundness]\label{prop:soundness}
If a finite Laurent-polynomial combination of cells equals a formal
target
\[
  R=\sum_{\nu=1}^{N}c_\nu(q)\,[B_\nu]
\]
in the free module, then
$\sum_{\nu=1}^{N}c_\nu(q)\,\NN_{A,m}(B_\nu;q)=0$. In particular, a
learned policy can affect \emph{which} certificate is found, but not the
validity of a certificate that passes exact free-module verification.
\end{proposition}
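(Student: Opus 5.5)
The natural route is through the evaluation map from the free module to $K$. Let $\mathcal M$ be the free module on the symbols $[B]$, $B\in\Q^{r}$, over the Laurent-polynomial ring $\Lambda$ in the relevant (rational, bounded-denominator) powers of $q$. We define the $\Lambda$-linear map
\[
  \mathrm{ev}:\mathcal M\to K,\qquad [B]\mapsto \NN_{A,m}(B;q),
\]
extended by linearity. This is well defined because $\mathcal M$ is free on the symbols, so any assignment of values to basis elements extends uniquely. It is also $\Lambda$-linear, since $\Lambda\subset K$ (powers $q^{x}$ with $x\in\frac1{M'}\Z$ for a suitable common denominator $M'$). We may enlarge the Puiseux level $M$ if necessary so that every exponent $B_j+A_{jj}/2$ and every entry of every $B_\nu$ lies in $\frac1M\Z$; only finitely many vectors occur in the given combination, so this is harmless.

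The key step is that $\mathrm{ev}$ kills every cell. By \eqref{eq:cell-general},
\[
  \mathrm{ev}(C_j(B))=\NN_{A,m}(B)-\NN_{A,m}(B+me_j)-q^{B_j+A_{jj}/2}\NN_{A,m}(B+Ae_j),
\]
and this vanishes by Theorem~\ref{thm:contig}. Hence $\mathcal C_{A,m}\subseteq\ker(\mathrm{ev})$, since a submodule generated by elements in a kernel lies in that kernel.

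Now suppose $\sum_k \lambda_k(q)\,C_{j_k}(B^{(k)})=R$ in $\mathcal M$ for finitely many $\lambda_k\in\Lambda$. Applying $\mathrm{ev}$ to both sides gives
\[
  0=\sum_k\lambda_k\,\mathrm{ev}(C_{j_k}(B^{(k)}))=\mathrm{ev}(R)=\sum_\nu c_\nu(q)\,\NN_{A,m}(B_\nu;q).
\]
The equality in $\mathcal M$ is a formal identity: after collecting coefficients of each symbol, both sides agree. Since $\mathrm{ev}$ is a function on $\mathcal M$, equal inputs give equal outputs, so no convergence issue arises beyond finiteness of the combination.

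The remark about learned policies then follows immediately. The conclusion depends only on the existence of the formal equality in $\mathcal M$, which is checked exactly, and not on how the combination was found.

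The only point requiring care, and the place I would expect the main obstacle, is making the coefficient ring precise. We need $q^{B_j+A_{jj}/2}$ and the coefficients $c_\nu$ to lie in a single ring $\Lambda$ that embeds in $K$. We also need each $\NN_{A,m}(B)$ to lie in $K$ with bounded denominators. Both hold once $M$ is chosen as a common denominator for the finitely many rational exponents involved, using the finiteness of sublevel sets noted in Section~\ref{subsec:topology}.
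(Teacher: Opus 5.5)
Your proof is correct and is the same argument as the paper's: apply the evaluation map $[B]\mapsto\NN_{A,m}(B;q)$, observe that every cell is sent to zero by Theorem~\ref{thm:contig}, and conclude by linearity. Your bookkeeping about fixing a common Puiseux denominator for the finitely many symbols involved is a reasonable tightening that the paper leaves implicit.
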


\begin{proof}
Apply the evaluation homomorphism $[B]\mapsto\NN_{A,m}(B;q)$. Every
cell maps to zero by Theorem~\ref{thm:contig}, hence so does every
finite combination of cells.
\end{proof}

\subsection{Tropical face limits}\label{subsec:tropical}
The next theorem is the principal general result. It describes how
affine motion of the linear parameter forces a Nahm sum onto a
lower-dimensional face, and which face.

\begin{theorem}[Tropical face-limit theorem]\label{thm:tropical}
Let $\beta,\delta\in\Q^{r}$ and consider the ray
\[
  F_h(q)=\NN_{A,m}(\beta+h\delta;\,q),
  \qquad h\in\Z_{\ge0}.
\]
Suppose $v\in\Z_{\ge0}^{r}$ satisfies the \emph{linear complementarity
condition}
\begin{equation}\tag{TF}\label{eq:TF}
  w:=Av+\delta\in\Q_{\ge0}^{r},
  \qquad
  v_jw_j=0
  \quad(1\le j\le r).
\end{equation}
Partition the coordinates as
\[
  I=\{j:v_j>0\},
  \qquad
  J=\{j:v_j=0,\;w_j=0\},
  \qquad
  \mathcal K=\{j:w_j>0\}.
\]
Then, coefficientwise,
\begin{equation}\label{eq:tropical-limit}
  q^{\frac12h^{2}v^{T}Av-h\beta^{T}v}\;F_h(q)
  \;\longrightarrow\;
  \frac{1}{(Q;Q)_\infty^{\,|I|}}
  \sum_{\substack{u_i\in\Z\ (i\in I)\\ u_j\in\Z_{\ge0}\ (j\in J)\\
                  u_k=0\ (k\in\mathcal K)}}
  \frac{q^{\frac12u^{T}Au+\beta^{T}u}}
       {\prod_{j\in J}(Q;Q)_{u_j}} .
\end{equation}
\end{theorem}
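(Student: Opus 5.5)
The plan is to shift the summation lattice to the moving point $hv$, i.e.\ substitute $n=hv+u$, and compute the exponent exactly. With $B=\beta+h\delta$,
\[
  \tfrac12(hv+u)^{T}A(hv+u)+(\beta+h\delta)^{T}(hv+u)
  =\tfrac12h^{2}v^{T}Av+h^{2}\delta^{T}v+h\beta^{T}v
   +h\,(Av+\delta)^{T}u+\tfrac12u^{T}Au+\beta^{T}u .
\]
Complementarity gives $\delta^{T}v=w^{T}v-v^{T}Av=-v^{T}Av$. The constant part is therefore $-\tfrac12h^{2}v^{T}Av+h\beta^{T}v$, and the normalizing factor $q^{\frac12h^{2}v^{T}Av-h\beta^{T}v}$ cancels it exactly. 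Because $w_j=0$ for $j\in I\cup J$, the remaining $h$-dependent term is $h\,w^{T}u=h\sum_{k\in\mathcal K}w_ku_k$. The normalized $F_h$ thus equals
\[
  G_h:=\sum_{u\in L_h}\frac{q^{\frac12u^{T}Au+\beta^{T}u+h\sum_{k\in\mathcal K}w_ku_k}}{\prod_{j}(Q;Q)_{hv_j+u_j}},
\]
where $L_h=\{u:\ u_j\ge -hv_j\}$. In particular $u_j\ge0$ for $j\in J\cup\mathcal K$, and $u_i\ge-hv_i$ for $i\in I$.

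Next I establish a uniform lower bound. Since $u_k\ge0$ and $w_k>0$ on $\mathcal K$, the extra term $h\,w^{T}u$ is nonnegative. Each denominator $1/(Q;Q)_N$ has nonnegative coefficients and order $0$. So every term has order at least $\tfrac12u^{T}Au+\beta^{T}u$, which is bounded below on $\Z^r$ by positive definiteness. This gives condition (i) of Definition~\ref{def:topology}, and it also shows that for fixed exponent $x$ only the finitely many $u$ in the sublevel set $\{\tfrac12u^{T}Au+\beta^{T}u\le x\}$ contribute to $[q^{x}]G_h$. This set is independent of $h$, which is the key point making the termwise limit legitimate.

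Now I pass to the limit term by term for each fixed $u$ in this finite set. If some $u_k>0$ with $k\in\mathcal K$, the term has order at least $h\,w_ku_k\to\infty$, so it eventually leaves the window below $x$; only $u_k=0$ survives. For $i\in I$, once $h$ is large we have $hv_i+u_i\ge0$, so the constraint on $u_i$ disappears and $u_i$ effectively ranges over $\Z$. In addition, Lemma~\ref{lem:stab} gives $1/(Q;Q)_{hv_i+u_i}\to1/(Q;Q)_\infty$ coefficientwise, agreeing through $Q^{hv_i+u_i}$. For $j\in J$ the term keeps $1/(Q;Q)_{u_j}$ with $u_j\ge0$. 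Since finitely many terms contribute to $[q^x]$, each eventually constant, $[q^x]G_h$ is eventually equal to $[q^x]$ of the right side of \eqref{eq:tropical-limit}. The same sublevel argument shows that this right side is a well-defined element of $K$.

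The main obstacle is ensuring that the contributing set is finite and independent of $h$, because the $I$-coordinates become bilateral and the region $L_h$ grows with $h$. This is handled by the observation that the $h$-dependent exponent $h\,w^{T}u$ is nonnegative on all of $L_h$. That observation needs only $w\ge0$ together with $u_k\ge0$ on $\mathcal K$, which holds because $v_k=0$ there. Finally, I check that the exponents lie in $\frac1M\Z$ for a common $M$; this follows because $A$, $\beta$ and $\delta$ are rational.
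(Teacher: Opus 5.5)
Your proposal is correct and follows essentially the same route as the paper. Both arguments substitute $n=hv+u$ and use complementarity plus the normalizer to cancel the $h^{2}$- and $h\beta^{T}v$-terms. Both then force $u_k=0$ on $\mathcal K$, make the $I$-coordinates bilateral via Lemma~\ref{lem:stab}, and bound the contributing lattice points by properness of $\tfrac12u^{T}Au+\beta^{T}u$. The only difference is the order of steps: you first use $h\,w^{T}u\ge0$ to fix an $h$-independent finite sublevel set and then discard the points with $u_k>0$ inside it, whereas the paper first discards all terms with $u_k\ge1$ uniformly via the bound $hw_k-C_0$ and then invokes properness on the face.
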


\begin{proof}
Substitute $n=hv+u$, so that $u$ ranges over the shifted cone
$u_j\ge-hv_j$. Expanding and adding the normalizer,
\begin{align*}
  &\tfrac12(hv+u)^{T}A(hv+u)+(\beta+h\delta)^{T}(hv+u)
   +\tfrac12h^{2}v^{T}Av-h\beta^{T}v\\
  &\qquad=\;h^{2}\,v^{T}(Av+\delta)\;+\;h\,u^{T}(Av+\delta)
   \;+\;\tfrac12u^{T}Au+\beta^{T}u
  \;=\;h\,w^{T}u+\tfrac12u^{T}Au+\beta^{T}u,
\end{align*}
where the $h^{2}$-term vanishes because $v^{T}w=0$ by \eqref{eq:TF}
(each product $v_jw_j$ is zero). Now fix an exponent $x$ and compute
$[q^{x}]$ of the left side of \eqref{eq:tropical-limit} for large $h$.
The residual exponent
$\rho(u)=\tfrac12u^{T}Au+\beta^{T}u$ is bounded below on $\Z^{r}$, say
by $-C_0$, and proper.
\begin{itemize}
\item For $k\in\mathcal K$ we have $v_k=0$, so $u_k=n_k\ge0$, and any
  term with $u_k\ge1$ has exponent at least $h\,w_k-C_0$, hence
  contributes nothing to $[q^{x}]$ once $h$ is large. This forces
  $u_k=0$.
\item On the face $u_k=0$ $(k\in\mathcal K)$ the exponent is
  $\rho(u)$, independent of $h$; by properness only a finite set
  $\mathcal L_x$ of lattice points $u$ has $\rho(u)\le x$, independent
  of $h$.
\item For $i\in I$, complementarity gives $w_i=0$; for $h$ large every
  $u\in\mathcal L_x$ satisfies $u_i\ge-hv_i$, so the constraint is
  inactive and $u_i$ is effectively bilateral, while by
  Lemma~\ref{lem:stab} the factor $(Q;Q)_{hv_i+u_i}^{-1}$ agrees with
  $(Q;Q)_\infty^{-1}$ through $q^{x}$ once $m(hv_i+u_i+1)>x$.
\item For $j\in J$ we have $n_j=u_j\ge0$, and the factor
  $(Q;Q)_{u_j}^{-1}$ is untouched.
\end{itemize}
Hence for all large $h$ the coefficient $[q^{x}]$ of the left side
equals $[q^{x}]$ of the right side of \eqref{eq:tropical-limit}; the
orders are bounded below by $-C_0$ uniformly, so the convergence is
coefficientwise in the sense of Definition~\ref{def:topology}.
\end{proof}

\begin{remark}[Linear complementarity geometry]\label{rem:LCP}
Condition \eqref{eq:TF} is a linear complementarity problem:
\[
  v\ge0,\qquad Av+\delta\ge0,\qquad v_j(Av+\delta)_j=0 .
\]
The support of $v$ predicts the bilateral variables (each contributing a
factor $(Q;Q)_\infty^{-1}$), the zero--zero coordinates predict residual
unilateral $q$-hypergeometric variables, and the strictly positive
coordinates of $w$ predict variables forced onto the boundary face. The
finite search for admissible integral $v$ (and for the ray data below)
is exactly the kind of bounded combinatorial exploration at which the
evolutionary methods of Section~\ref{sec:ml} excel; an accepted solution
is then verified by the two displayed linear computations.
\end{remark}

\begin{remark}[Arithmetic subfamilies]\label{rem:subfamily}
It may happen that no admissible $v$ exists for the step direction
$\delta$ itself but one exists for a multiple $a\delta$, $a\ge2$. Since
$\NN_{A,m}(\beta+(ah+b)\delta)$ is again a ray with data
$(\beta+b\delta,\;a\delta)$, Theorem~\ref{thm:tropical} then applies
along the arithmetic progression $t=ah+b$. Both phenomena occur in the
application: the forward ray uses $a=1$ and the backward ray requires
$a=4$ (Section~\ref{sec:backward}).
\end{remark}

\subsection{Recurrence--boundary determination}\label{subsec:determination}

\begin{theorem}[Recurrence--boundary determination principle]
\label{thm:determination}
Let $(F_t)_{t\in\Z}$ in $K$ satisfy an order-$s$ linear recurrence whose
coefficients are units of $K$, and let $c_{t,j}\in K$ $(0\le j<s)$ be the
fundamental solutions of the same recurrence, characterized by
$c_{j',j}=\delta_{j'j}$ for $0\le j'<s$; then $F_t=\sum_{j<s}c_{t,j}F_j$
for all $t$. Assume that for $1\le\nu\le s$ there are integer sequences
$t_\nu(h)$ and gauges $\gamma_\nu(h)\in K$ such that, coefficientwise,
\[
  \gamma_\nu(h)\,F_{t_\nu(h)}\longrightarrow\Theta_\nu,
  \qquad
  \gamma_\nu(h)\,c_{t_\nu(h),\,j}\longrightarrow\ell_{\nu j}
  \quad(0\le j<s).
\]
If the matrix $L=(\ell_{\nu j})$ is invertible over $K$, then
\begin{equation}\label{eq:determination}
  (F_0,\dots,F_{s-1})^{T}=L^{-1}(\Theta_1,\dots,\Theta_s)^{T}.
\end{equation}
Thus the initial sums are determined by the recurrence together with $s$
independent boundary measurements.
\end{theorem}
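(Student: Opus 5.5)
The argument has two parts. First we establish the representation $F_t=\sum_{j<s}c_{t,j}F_j$. Then we pass to the limit along each of the $s$ boundary sequences and solve the resulting linear system.

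For the representation, write the recurrence as
\[
  a_s(t)F_{t+s}+a_{s-1}(t)F_{t+s-1}+\dots+a_0(t)F_t=0,
\]
with every coefficient $a_i(t)$ a unit of $K$. Because $a_s(t)$ is a unit, any $s$ consecutive values determine the next value forward. Because $a_0(t)$ is a unit, they also determine the preceding value backward. Hence the solution space over $K$ of sequences indexed by $\Z$ is $K$-linear, and the map to initial data $(F_0,\dots,F_{s-1})\in K^s$ is a bijection. The fundamental solutions $c_{\cdot,j}$ are exactly the preimages of the standard basis vectors. The sequence $t\mapsto F_t-\sum_{j<s}c_{t,j}F_j$ is again a solution, and its initial data vanish, so it is identically zero. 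This is a short two-sided induction on $t$, first for $t\ge s$ and then for $t<0$.

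For the limits, fix $\nu$ and multiply the identity at $t=t_\nu(h)$ by $\gamma_\nu(h)$:
\[
  \gamma_\nu(h)F_{t_\nu(h)}=\sum_{j<s}\bigl(\gamma_\nu(h)c_{t_\nu(h),j}\bigr)F_j .
\]
The left side converges coefficientwise to $\Theta_\nu$. On the right, each $F_j$ is a fixed element of $K$, so it is a constant sequence and trivially convergent. By hypothesis each $\gamma_\nu(h)c_{t_\nu(h),j}$ converges to $\ell_{\nu j}$. Sums and products of coefficientwise convergent sequences converge to the corresponding sums and products, as noted after Definition~\ref{def:topology}. The right side therefore converges to $\sum_j\ell_{\nu j}F_j$.

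Coefficientwise limits are unique, since each coefficient is eventually constant. Hence $\Theta_\nu=\sum_j\ell_{\nu j}F_j$ for each $\nu$, that is, $L(F_0,\dots,F_{s-1})^{T}=(\Theta_1,\dots,\Theta_s)^{T}$. Since $L$ is invertible over the field $K$, multiplying by $L^{-1}$ gives \eqref{eq:determination}.

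The only point requiring care is the product rule for limits. It needs the uniform lower bound on orders, which is part (i) of Definition~\ref{def:topology}. This bound is built into the hypothesis that the sequences converge coefficientwise, and for the constant factors $F_j$ it is immediate. No real obstacle remains beyond checking that the units hypothesis makes the two-sided extension well defined. That hypothesis is exactly what guarantees the $c_{t,j}$ exist for negative $t$, which the backward ray of the application requires.
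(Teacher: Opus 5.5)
Your proposal is correct and follows essentially the same route as the paper. Both expand $F_{t_\nu(h)}$ in the fundamental solutions, multiply by the gauge $\gamma_\nu(h)$, pass to the coefficientwise limit using continuity of sums and products, and invert $L$. You additionally spell out two points the paper leaves implicit: the two-sided induction behind $F_t=\sum_{j<s}c_{t,j}F_j$, which uses the unit hypothesis on the leading and trailing coefficients, and the uniqueness of coefficientwise limits.
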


\begin{proof}
Transporting $F_{t_\nu(h)}$ to the basis window and multiplying by the
gauge, $\gamma_\nu(h)F_{t_\nu(h)}=\sum_{j<s}\gamma_\nu(h)
c_{t_\nu(h),j}\,F_j$. By the continuity of sums and products under
coefficientwise convergence (Section~\ref{subsec:topology}), the right
side converges to $\sum_j\ell_{\nu j}F_j$, giving
$\Theta_\nu=\sum_j\ell_{\nu j}F_j$ for each $\nu$; invertibility of $L$
yields \eqref{eq:determination}.
\end{proof}

\begin{corollary}[Finite modular certification]\label{cor:modular-cert}
In the setting of Theorem~\ref{thm:determination}, suppose the entries of
$L$, the boundary values $\Theta_\nu$, and proposed closed forms
$P_0,\dots,P_{s-1}$ for $F_0,\dots,F_{s-1}$ are finite combinations of
generalized-eta quotients on a common congruence subgroup. Then the
statements $F_j=P_j$ reduce to the $s$ modular-function identities
$\sum_j\ell_{\nu j}P_j=\Theta_\nu$; each is decidable by an exact
non-infinity cusp-order bound together with a $q$-expansion beyond the
corresponding valence bound (Section~\ref{sec:certificates}).
\end{corollary}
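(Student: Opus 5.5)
The corollary has two parts: a linear-algebra reduction, and a decidability claim. For the reduction, I would apply Theorem~\ref{thm:determination} directly. It gives $\Theta_\nu=\sum_{j<s}\ell_{\nu j}F_j$ for $1\le\nu\le s$, with $L=(\ell_{\nu j})$ invertible over $K$. Suppose the $s$ identities $\sum_j\ell_{\nu j}P_j=\Theta_\nu$ hold. Subtracting gives $L\,(F_0-P_0,\dots,F_{s-1}-P_{s-1})^{T}=0$, and invertibility of $L$ forces $F_j=P_j$ for every $j$. Conversely, if $F_j=P_j$, the identities follow at once from the displayed relation. So the statements $F_j=P_j$ are jointly equivalent to the $s$ identities $\sum_j\ell_{\nu j}P_j=\Theta_\nu$. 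This step is formal and needs nothing beyond \eqref{eq:determination}.

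For decidability, I would fix $\nu$ and set
\[
  E_\nu=\sum_j\ell_{\nu j}P_j-\Theta_\nu .
\]
By hypothesis, $E_\nu$ is a finite $K$-linear combination of generalized-eta quotients on a common congruence subgroup $\Gamma$. First, I would multiply $E_\nu$ by a suitable power of $q$, or divide by one chosen eta quotient $g$. The goal is that every summand becomes a modular function of weight $0$ on some $\Gamma_1(N)$, with trivial or matching multiplier; the admissibility conditions are checked from the exponent data. The next step is to compute exactly the order at every cusp of each summand, using the standard generalized-eta cusp-order formula. For each cusp $\zeta\ne\infty$, this gives a lower bound $\ord_\zeta(E_\nu)\ge\min$ over the summands.

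The valence formula on $X(\Gamma)$ then finishes the argument. If $E_\nu\ne0$, then $\sum_\zeta\ord_\zeta E_\nu\le0$ once interior points, where the order is at least $0$, are included. So it suffices to check $[q^{x}]E_\nu=0$ for all $x$ up to
\[
  B=-\sum_{\zeta\ne\infty}\min\ord_\zeta ,
\]
suitably normalized by cusp widths. Indeed, this yields $\ord_\infty E_\nu>B$, contradicting the valence bound unless $E_\nu=0$. This is a finite exact computation with finitely many coefficients, so each identity is decidable.

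The main obstacle is the normalization step: ensuring that all summands genuinely live on one group with a common multiplier, so that the valence formula applies. I would handle this by first passing to a common level $N$ that is a multiple of all generalized-eta levels. Then I would clear denominators by a single eta quotient that makes the weight $0$, and verify the standard congruence conditions (the Robins criteria) term by term. If the multipliers disagree, I would multiply by further $q$-powers or use a level-raising substitution before applying the cusp-order count.
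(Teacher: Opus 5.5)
Your proposal is correct and follows the paper's argument. Invertibility of $L$ forces $F_j=P_j$ once the $P_j$ satisfy the same system. Each identity $\sum_j\ell_{\nu j}P_j=\Theta_\nu$ is then settled by the valence criterion of Section~\ref{subsec:valence}: normalize by a base term, bound the non-infinity cusp orders, and check the $q$-expansion beyond $B^{*}$. You spell this out in more detail than the paper's one-line proof.

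One wording point should be tightened. $E_\nu$ should not be called a $K$-linear combination, since the valence formula needs constant coefficients. Any $q$-power prefactors must be absorbed into the generalized-eta quotients themselves.
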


\begin{proof}
If the $P_j$ satisfy the same invertible system as the $F_j$, then
$P_j=F_j$ by uniqueness of the solution; the valence criterion decides
each of the $s$ product identities as in
Section~\ref{subsec:valence}.
\end{proof}

\subsection{Where machine learning and reinforcement learning enter}
\label{subsec:where-ml}
It is useful to separate two questions:
\emph{How is a promising object found?} and \emph{How is it proved?}
The learned algorithms answer only the first question. Once a candidate has
been found, an exact argument replaces the learned search.

\begin{center}
\begin{tabular}{p{3.5cm}p{5.1cm}p{5.1cm}}
\toprule
\textbf{Stage} & \textbf{What the machine searches for} & \textbf{How it is proved exactly}\\
\midrule
Affine discovery & a direction $d$, a small recurrence order, and simple monomial coefficients & exhaustive residual sweep; certificate of Theorem~\ref{thm:rec}\\\mbox{}\\
Proof synthesis & a short sequence of shifted cells and monomial multipliers & free-module cancellation (Proposition~\ref{prop:soundness})\\\mbox{}\\
Boundary discovery & a complementarity velocity $v$ and ray data $(a,b)$ & Theorem~\ref{thm:tropical} and direct exponent calculations\\\mbox{}\\
Product recognition & short generalized-eta expressions & cusp orders, valence formula, exact integer coefficients\\
\bottomrule
\end{tabular}
\end{center}

For example, the recurrence search is not asked to invent an arbitrary
formula. It searches a small, explicitly specified family of formulas and
compares their exact Laurent coefficients. Similarly, the RL agent is not
allowed to write arbitrary proof steps: every action is one instance of the
already-proved contiguous relation. Thus learning changes the efficiency of
the search, but it does not enlarge the set of logically allowed proof
steps.

This distinction is important. Once an affine direction and an ansatz are
fixed, classical packages such as \textsf{qMultiSum}~\cite{Riese03} can
derive recurrences for $q$-hypergeometric multisums. The harder exploratory
question is which directions, recurrence orders, sparse certificates, and
asymptotic rays are worth trying. The learned algorithms are used for that
finite combinatorial exploration; exact symbolic mathematics remains the
sole authority for every accepted statement.

\section{The machine-discovered recurrence and its five-cell certificate}
\label{sec:recurrence}

We now specialize to the data \eqref{eq:ABdef}, with $m=5$, $Q=q^{5}$,
and write $F(B)=\NN_{A,5}(B;q)$, so that $\FF_t=F(B_t)$ and
\begin{equation}\label{eq:Bt-affine}
  B_t=B_0+t\,d,
  \qquad
  B_0=\Bigl(\tfrac12,\,-\tfrac32,\,2\Bigr)^{T},
  \qquad
  d=(2,-1,-2)^{T}.
\end{equation}
For $v\in\Z^{3}$ write $[v]:=F(B_t+v)$, so $[0]=\FF_t$,
$[d]=\FF_{t+1}$, $[2d]=\FF_{t+2}$. Applying Theorem~\ref{thm:contig}
with $B=B_t+v$ and $(B_t)_r=(B_0)_r+t\,d_r$ gives, for each coordinate
$r$ and shift $v$, the \emph{vanishing cell}
\begin{equation}\label{eq:cell}
  C_r(v)\;:=\;[v]-[v+5e_r]-q^{\,d_r t+v_r+c_r}\,[v+Ae_r]
  \;=\;0,
  \qquad
  c:=B_0+\tfrac12\operatorname{diag}(A)=(2,2,6)^{T},
\end{equation}
since $d_rt+v_r+c_r=(B_t+v)_r+A_{rr}/2$.

\begin{theorem}[The recurrence]\label{thm:rec}
For every $t\in\Z$, in $\Z((q))$,
\begin{equation}\label{eq:rec}
  q^{-t}\,\FF_t\;=\;q^{2t+1}\,\FF_{t+1}+\FF_{t+2}.
\end{equation}
\end{theorem}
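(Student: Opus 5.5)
The plan is to prove \eqref{eq:rec} by exhibiting an explicit finite certificate: a combination
\[
  \sum_{\ell=1}^{5} \mu_\ell(q,q^{t})\,C_{r_\ell}(v_\ell)
\]
of vanishing cells \eqref{eq:cell}, with monomial multipliers $\mu_\ell=\pm q^{a_\ell t+b_\ell}$, that equals the formal target
\[
  R \;=\; q^{-t}[0]-q^{2t+1}[d]-[2d]
\]
in the free module on the symbols $[v]$ over $\Z[q^{\pm1},q^{\pm t}]$. This is the same certificate mechanism as Proposition~\ref{prop:soundness}, specialised to the family \eqref{eq:Bt-affine}. Each cell exponent $d_rt+v_r+c_r$ is affine in $t$ with integer coefficients. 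Hence if the identity holds formally with $t$ treated as an indeterminate exponent, then evaluating $[v]\mapsto F(B_t+v)$ gives \eqref{eq:rec} simultaneously for every $t\in\Z$. The evaluation is legitimate because each cell vanishes by Theorem~\ref{thm:contig}.

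To locate the certificate I would first record the short lattice relations among the shifts available to the cells, namely $5e_r$ and the columns $Ae_1=(3,-4,-3)$, $Ae_2=(-4,7,4)$, $Ae_3=(-3,4,8)$, together with the step $d=(2,-1,-2)$. Two relations stand out:
\[
  Ae_1+Ae_3=5e_3,
  \qquad
  d=5(e_1-e_2-e_3)-Ae_1 .
\]
These indicate how a chain of cells can start at $[0]$ and reach $[d]$ and $[2d]$, passing through intermediate symbols that must cancel in pairs.

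Concretely, I would begin with $q^{-t}C_r(0)$ for the coordinate whose monomial yields the factor $q^{2t+1}$ after normalisation. The candidates are $r=1$, since $d_1=2$, and $r=3$, since $d_3=-2$ together with the $q^{-t}$ prefactor. I would then append cells at the newly created symbols, choosing the coordinate each time so that the offending symbol cancels. Throughout I would track both the lattice point and the $t$-linear exponent, checking at each step that the exponents of the symbols being cancelled agree exactly, including the constant terms $c=(2,2,6)$ and the $v_r$ shifts. Closure after five cells is then an equality of finitely many Laurent monomials attached to finitely many lattice points, which can be checked mechanically.

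The main obstacle is the search itself: choosing the five shifts $v_\ell$, coordinates $r_\ell$ and multipliers so that every auxiliary symbol cancels, not just so that the three target symbols appear with the right coefficients. This is where the machine proposal is used. Once a candidate is in hand, verification consists of listing, for each lattice point, the coefficients contributed by each cell and confirming that their sum equals $q^{-t}$, $-q^{2t+1}$, $-1$ at $0$, $d$, $2d$, and zero elsewhere. If no five-cell combination closes with monomial multipliers, the fallback is to allow Laurent-polynomial multipliers and a few additional cells; soundness is unaffected by this change.
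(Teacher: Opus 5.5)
\textbf{The certificate itself is missing.} Your mechanism is the paper's: a combination of cells \eqref{eq:cell} with monomial multipliers in $\Z[q^{\pm1},q^{\pm t}]$ that equals $R_t=q^{-t}[0]-q^{2t+1}[d]-[2d]$ in the free module proves \eqref{eq:rec} for every $t$ at once, by Proposition~\ref{prop:soundness}. But your proposal stops where the proof begins. The entire content of the theorem is the explicit witness, and you never exhibit one. You defer it to ``the machine proposal'' and even allow that five cells might not suffice. The truth of \eqref{eq:rec} does not by itself guarantee that $R_t$ is a finite combination of cells, because the evaluation $[v]\mapsto F(B_t+v)$ can a priori kill elements outside the cell submodule. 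So existence has to be shown by producing a certificate, and what you have written is a search procedure, not a proof. The paper's certificate is
\[
  R_t=-C_2(2d)+C_1(-1,3,1)+q^{2-2t}\,C_1(-4,7,4)-C_3(4,3,-4)+q^{-t}\,C_2(0).
\]
Expanding the five cells, the six auxiliary symbols $[0,5,0]$, $[4,3,-4]$, $[4,3,1]$, $[-1,3,1]$, $[-4,7,4]$, $[1,7,4]$ cancel in pairs, and exactly $R_t$ survives.

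\textbf{Your starting heuristic is also misdirected.} Neither $q^{-t}C_1(0)$ nor $q^{-t}C_3(0)$ produces a coefficient $q^{2t+1}$; their third terms are $-q^{t+2}[Ae_1]$ and $-q^{6-3t}[Ae_3]$. In the working certificate:
\begin{itemize}
\item The term $-q^{2t+1}[d]$ comes from $C_1(d-Ae_1)=C_1(-1,3,1)$, whose monomial is already $q^{2t+(-1)+2}$, with no multiplier.
\item The symbol $[0]$ enters through $q^{-t}C_2(0)$, i.e.\ the coordinate $r=2$, which you excluded.
\item $[2d]$ is tied to $[0]$ by the relation $2d+Ae_2=5e_2$. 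This makes $-C_2(2d)$ and $q^{-t}C_2(0)$ share, and cancel, the symbol $q^{-t}[0,5,0]$.
\end{itemize}
Your two lattice relations are correct and are genuinely used by this certificate. The relation $d=5(e_1-e_2-e_3)-Ae_1$ is what makes $[4,3,1]=(d-Ae_1)+5e_1=(2d+5e_2)+5e_3$ common to $C_1(-1,3,1)$ and $C_3(4,3,-4)$. The relation $Ae_1+Ae_3=5e_3$ closes the loop $[4,3,-4]\to[1,7,4]\to[-4,7,4]\to[-1,3,1]\to[4,3,1]$. What you lack is the third relation $2d+Ae_2=5e_2$ and the actual assembly with its forced multipliers. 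For example, $q^{2-2t}$ on $C_1(-4,7,4)$ is needed so that its term $q^{2t-2}[-1,3,1]$ cancels the $[-1,3,1]$ from $C_1(-1,3,1)$.
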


\begin{proof}
Set $R_t:=q^{-t}[0]-q^{2t+1}[d]-[2d]$, with $d=(2,-1,-2)$,
$2d=(4,-2,-4)$. We claim the exact identity of formal symbols
\begin{equation}\label{eq:certificate}
  R_t\;=\;-\,C_2(2d)\;+\;C_1(-1,3,1)\;+\;q^{2-2t}\,C_1(-4,7,4)
        \;-\;C_3(4,3,-4)\;+\;q^{-t}\,C_2(0).
\end{equation}
Expanding each cell by \eqref{eq:cell}, with the columns
$Ae_1=(3,-4,-3)$, $Ae_2=(-4,7,4)$, $Ae_3=(-3,4,8)$:
\begin{align*}
C_2(4,-2,-4)&=[4,-2,-4]-[4,3,-4]-q^{-t}\,[0,5,0],\\
C_1(-1,3,1)&=[-1,3,1]-[4,3,1]-q^{2t+1}\,[2,-1,-2],\\
C_1(-4,7,4)&=[-4,7,4]-[1,7,4]-q^{2t-2}\,[-1,3,1],\\
C_3(4,3,-4)&=[4,3,-4]-[4,3,1]-q^{-2t+2}\,[1,7,4],\\
C_2(0,0,0)&=[0,0,0]-[0,5,0]-q^{2-t}\,[-4,7,4].
\end{align*}
Substituting into the right side of \eqref{eq:certificate}, the
auxiliary symbols cancel in pairs: $[4,3,-4]$ between the first and
fourth lines; $q^{-t}[0,5,0]$ between the first and fifth; $[-1,3,1]$
between the second and third (after the prefactor $q^{2-2t}$);
$[4,3,1]$ between the second and fourth; and $q^{2-2t}[-4,7,4]$ and
$q^{2-2t}[1,7,4]$ among the third, fourth, and fifth. What survives is
exactly $q^{-t}[0]-q^{2t+1}[2,-1,-2]-[4,-2,-4]=R_t$, proving
\eqref{eq:certificate}. By Proposition~\ref{prop:soundness} the
evaluation of $R_t$ vanishes, which is \eqref{eq:rec}. The accompanying
verifiers additionally expand \eqref{eq:certificate} in the free module
over $\Z[q^{\pm1},q^{\pm t}]$ and confirm every coefficient
mechanically.
\end{proof}

\begin{remark}[Laurent caveat]\label{rem:laurent}
Identity \eqref{eq:rec} holds in $\Z((q))$: $\FF_t\in\Z[[q]]$ for
$t=0,1,2$, but general members of the family have finite principal
parts---for instance $\FF_3$ contains the term $q^{-1}$ from
$(i,j,k)=(0,1,0)$. For $t=0,1$ the recurrence specializes to
\begin{equation}\label{eq:rec01}
  \FF_0=q\FF_1+\FF_2,
  \qquad
  \FF_1=q^{4}\FF_2+q\FF_3,
\end{equation}
the second already involving the principal part of $\FF_3$; both were
re-verified coefficientwise through $q^{120}$ by direct expansion.
\end{remark}

\begin{remark}[How the recurrence and its proof were found]\label{rem:provenance}
The recurrence \eqref{eq:rec} was not given to the search program in advance.
The evolutionary search tested the finite family
\[
  \FF_{t+2}=\varepsilon_0q^{a_0t+b_0}\FF_t
  +\varepsilon_1q^{a_1t+b_1}\FF_{t+1},
\]
with bounded integer parameters. Among the $54{,}756$ candidates, the tuple
$(-1,0,+1;\,2,1,-1)$ was the unique candidate whose exact Laurent
coefficients vanished throughout the search window; an exhaustive sweep of
the same grammar confirmed uniqueness. This suggested \eqref{eq:rec}.

The proof was found separately. A formula-level $Q$-learning agent was given
only the legal cell identity \eqref{eq:cell}. Starting from the residual
$R_t$, it repeatedly chose one shifted cell and one Laurent-monomial
multiplier, trying to reach the zero residual. It found the five-cell
certificate \eqref{eq:certificate}. A breadth-first search of $298{,}442$
states through depth four found no shorter certificate in the released
search environment. The displayed five-cell identity, not the learning
process, is the proof.
\end{remark}

\section{Quadratic gauge and two-sided \texorpdfstring{$q$}{q}-Fibonacci dynamics}
\label{sec:gauge}

\begin{lemma}[Gauge transformation]\label{lem:gauge}
Define
\begin{equation}\label{eq:gauge}
  G_t=q^{\,t(3t-1)/2}\,\FF_t
  \quad(t\in\Z),
  \qquad
  H_n=G_{-n}.
\end{equation}
Then \eqref{eq:rec} is equivalent to
\begin{equation}\label{eq:fib}
  H_n=H_{n-1}+Q^{\,n-1}H_{n-2}
  \qquad(n\in\Z),
\end{equation}
with
\begin{equation}\label{eq:init}
  H_0=\FF_0,
  \qquad
  H_1=\FF_0+q\,\FF_1 .
\end{equation}
\end{lemma}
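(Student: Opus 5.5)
The plan is a direct change of variables in the recurrence of Theorem~\ref{thm:rec}, followed by a separate evaluation of the two initial values. The gauge factors $q^{t(3t-1)/2}$ are units of $K$, so $G_t$ and $\FF_t$ determine each other. The map $t\mapsto -t$ is a bijection of $\Z$. Hence it suffices to show that, after substituting $\FF_t=q^{-t(3t-1)/2}G_t$ and multiplying by a suitable monomial, \eqref{eq:rec} becomes exactly \eqref{eq:fib}. Every step is multiplication by a unit, so each one is reversible, and this gives the claimed equivalence and not only one implication.

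\textbf{Step 1: substitution.} I substitute into $q^{-t}\FF_t=q^{2t+1}\FF_{t+1}+\FF_{t+2}$ and multiply through by $q^{(t+2)(3t+5)/2}$, so that the last term becomes $G_{t+2}$. The exponent on $G_t$ is then
\[
\tfrac{(t+2)(3t+5)-t(3t-1)}{2}-t=5t+5 .
\]
The exponent on $G_{t+1}$ is
\[
\tfrac{(t+2)(3t+5)-(t+1)(3t+2)}{2}+2t+1=5t+5 .
\]
So \eqref{eq:rec} is equivalent to $Q^{t+1}G_t=Q^{t+1}G_{t+1}+G_{t+2}$ for all $t\in\Z$. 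The equality of these two exponents is the real content of the lemma: it is why the quadratic gauge with coefficient $3/2$ was chosen. It is the only computation I would double-check carefully.

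\textbf{Step 2: reflection.} I put $t=-n$ and write $H_n=G_{-n}$. This gives $Q^{1-n}H_n=Q^{1-n}H_{n-1}+H_{n-2}$. Multiplying by the unit $Q^{n-1}$ yields \eqref{eq:fib}, and this holds for every $n\in\Z$ because $t$ ranged over all of $\Z$.

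\textbf{Step 3: initial values.} First, $H_0=G_0=\FF_0$ is immediate. Next, $H_1=G_{-1}=q^{2}\FF_{-1}$. To express this in terms of $\FF_0$ and $\FF_1$, I take \eqref{eq:rec} at $t=-1$, which reads $q\FF_{-1}=q^{-1}\FF_0+\FF_1$. Solving gives $\FF_{-1}=q^{-2}\FF_0+q^{-1}\FF_1$, and therefore $H_1=\FF_0+q\FF_1$, which is \eqref{eq:init}. All manipulations take place in the field $K$; the Laurent caveat of Remark~\ref{rem:laurent} causes no difficulty, because no positivity of orders is used anywhere.
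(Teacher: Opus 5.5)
Your proposal is correct and matches the paper's proof: the paper also substitutes $\FF_t=q^{-t(3t-1)/2}G_t$ into \eqref{eq:rec} and checks two exponent identities (the same ones behind your $5t+5$ computations) to get $G_t=G_{t+1}+Q^{-(t+1)}G_{t+2}$. It then sets $n=-t$ and reads off $H_1=q^{2}\FF_{-1}=\FF_0+q\FF_1$ from the case $t=-1$ of \eqref{eq:rec}. The only differences are cosmetic: you normalize by multiplying through by $q^{(t+2)(3t+5)/2}$ rather than dividing by $q^{-t-t(3t-1)/2}$, and you state explicitly that each step is reversible, which the paper leaves implicit.
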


\begin{proof}
Write $g(t)=t(3t-1)/2$, so $\FF_t=q^{-g(t)}G_t$. Substituting into
\eqref{eq:rec} and comparing exponents,
\[
  -t-g(t)=2t+1-g(t+1),
  \qquad
  -g(t+2)=-t-g(t)-5(t+1),
\]
both elementary polynomial identities in $t$. Dividing \eqref{eq:rec} by
$q^{-t-g(t)}$ gives $G_t=G_{t+1}+Q^{-(t+1)}G_{t+2}$, and $n=-t$ yields
\eqref{eq:fib}. For \eqref{eq:init}: $H_0=G_0=\FF_0$, while
$H_1=G_{-1}=q^{2}\FF_{-1}$ and the case $t=-1$ of \eqref{eq:rec} reads
$q\FF_{-1}=q^{-1}\FF_0+\FF_1$, i.e.\ $q^{2}\FF_{-1}=\FF_0+q\FF_1$.
\end{proof}

\subsection{Forward direction: Schur polynomials}
Let $(\mathcal C_n)$ and $(\mathcal B_n)$ solve \eqref{eq:fib} with
$\mathcal C_0=\mathcal C_1=1$ and $\mathcal B_0=0$, $\mathcal B_1=1$;
these are the fundamental solutions of
Theorem~\ref{thm:determination} in the gauged normalization.

\begin{lemma}[Schur forms]\label{lem:schur}
For all $n\ge0$,
\begin{equation}\label{eq:schur}
  \mathcal C_n=\sum_{r\ge0}Q^{r^{2}}
  \genfrac{[}{]}{0pt}{0}{n-r}{r}_{Q},
  \qquad
  \mathcal B_n=\sum_{r\ge0}Q^{r^{2}+r}
  \genfrac{[}{]}{0pt}{0}{n-r-1}{r}_{Q}.
\end{equation}
\end{lemma}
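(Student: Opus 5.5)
We argue by induction on $n$, using the $Q$-Pascal rule for Gaussian binomials. Write
\[
  \mathcal C_n'=\sum_{r\ge0}Q^{r^{2}}\genfrac{[}{]}{0pt}{0}{n-r}{r}_{Q},
  \qquad
  \mathcal B_n'=\sum_{r\ge0}Q^{r^{2}+r}\genfrac{[}{]}{0pt}{0}{n-r-1}{r}_{Q}
\]
for the right sides of \eqref{eq:schur}. We use the convention that $\genfrac{[}{]}{0pt}{1}{N}{k}_Q=0$ unless $0\le k\le N$, so both sums are finite. Since \eqref{eq:fib} is a second-order recurrence and $Q^{n-1}$ is a unit, a solution is determined by two consecutive values. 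It therefore suffices to check the initial values and to verify that $\mathcal C_n'$ and $\mathcal B_n'$ satisfy \eqref{eq:fib} for $n\ge2$.

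\textbf{Initial values.} For $\mathcal C'$, only $r=0$ contributes at $n=0,1$, giving $\mathcal C_0'=\mathcal C_1'=1$. For $\mathcal B'$, at $n=0$ the binomial $\genfrac{[}{]}{0pt}{1}{-r-1}{r}_Q$ vanishes for every $r\ge0$, so $\mathcal B_0'=0$. At $n=1$ only $r=0$ survives, giving $\mathcal B_1'=1$.

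\textbf{Recurrence step for $\mathcal C'$.} We use the $Q$-Pascal identity in the form
\[
  \genfrac{[}{]}{0pt}{0}{N}{r}_Q=\genfrac{[}{]}{0pt}{0}{N-1}{r}_Q+Q^{N-r}\genfrac{[}{]}{0pt}{0}{N-1}{r-1}_Q .
\]
Take $N=n-r$. The first term sums to $\mathcal C_{n-1}'$. The second term is
\[
  \sum_r Q^{r^{2}+n-2r}\genfrac{[}{]}{0pt}{0}{n-r-1}{r-1}_Q ,
\]
and after the shift $r=s+1$ it becomes
\[
  \sum_s Q^{s^{2}+n-1}\genfrac{[}{]}{0pt}{0}{(n-2)-s}{s}_Q=Q^{n-1}\mathcal C_{n-2}',
\]
because $(s+1)^2+n-2(s+1)=s^2+n-1$. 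This is \eqref{eq:fib}.

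\textbf{Recurrence step for $\mathcal B'$.} The same computation applies with $N=n-r-1$ and weight $Q^{r^2+r}$. The second term carries the exponent
\[
  r^2+r+(n-1-2r)=r^2-r+n-1 .
\]
With $r=s+1$ this equals $s^2+s+n-1$, and the binomial becomes $\genfrac{[}{]}{0pt}{1}{n-s-3}{s}_Q$, which is the one appearing in $\mathcal B_{n-2}'$. So the second term is $Q^{n-1}\mathcal B_{n-2}'$.

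The only delicate point is the boundary behaviour of the Pascal identity, since the index ranges above include $N<0$ and $r=0$. One checks that the Pascal rule stays valid under the zero convention whenever $N\ge0$, and also for $r=0$ (where it reads $1=1$). The cases with $N<0$ contribute zero on both sides. The terms near $r\approx n/2$ need a brief check, which can be done directly. With this verified, induction gives $\mathcal C_n=\mathcal C_n'$ and $\mathcal B_n=\mathcal B_n'$ for all $n\ge0$.
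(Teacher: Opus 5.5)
Your argument is correct and is the same as the paper's: check $n=0,1$, then apply the Gaussian--Pascal rule termwise and reindex $r=s+1$ in the second sum to recover \eqref{eq:fib}. One small correction: under your zero convention the Pascal rule fails at $(N,r)=(0,0)$, where it reads $1=0$, so it is not valid for all $N\ge0$ and the $r=0$ case is not always $1=1$. That pair arises only for $n=0$ in the $\mathcal C'$ step and $n=1$ in the $\mathcal B'$ step, so it never occurs for $n\ge2$, and no separate check near $r\approx n/2$ is needed.
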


\begin{proof}
Both hold for $n=0,1$. The Gaussian--Pascal rule
$\genfrac{[}{]}{0pt}{1}{m}{r}_Q
 =\genfrac{[}{]}{0pt}{1}{m-1}{r}_Q
 +Q^{\,m-r}\genfrac{[}{]}{0pt}{1}{m-1}{r-1}_Q$
shows that the right sides satisfy \eqref{eq:fib}; induction completes
the proof. These are the classical Schur polynomials attached to the
Rogers--Ramanujan continued fraction \cite{Andrews98}.
\end{proof}

By linearity, $H_n=\mathcal C_n\FF_0+q\,\mathcal B_n\FF_1$ for $n\ge0$.
Letting $n\to\infty$ and using Lemma~\ref{lem:stab} (for a fixed power
of $Q$ only finitely many $r$ contribute, and each Gaussian binomial has
stabilized once $n-2r$ exceeds it),
\begin{equation}\label{eq:fwd-lin}
  \gamma_-(n)\,c\text{-limits:}\qquad
  \mathcal C_n\to G(Q),\quad \mathcal B_n\to H(Q),
  \qquad\text{so}\qquad
  \lim_{n\to\infty}H_n=G(Q)\,\FF_0+q\,H(Q)\,\FF_1 .
\end{equation}

\subsection{Backward direction: continuants}
For $m\ge0$ define
\begin{equation}\label{eq:EOdef}
  E_m=Q^{-m^{2}}H_{-2m},
  \qquad
  O_m=Q^{-m(m+1)}H_{-(2m+1)} .
\end{equation}
Running \eqref{eq:fib} backwards, $H_{n-2}=Q^{1-n}(H_n-H_{n-1})$, and an
exponent count gives
\begin{equation}\label{eq:EOrec}
  E_{m+1}=E_m-Q^{m}\,O_m,
  \qquad
  O_{m+1}=O_m-Q^{m+1}\,E_{m+1},
\end{equation}
with $E_0=\FF_0$ and $O_0=H_{-1}=G_1=q\,\FF_1$.

\begin{lemma}[Finite backward continuants]\label{lem:continuant}
For all $m\ge0$,
\begin{equation}\label{eq:Om}
  O_m=
  -\,Q\sum_{r=0}^{m-1}Q^{\,r^{2}+2r}
     \genfrac{[}{]}{0pt}{0}{m+r}{2r+1}_{Q}\,E_0
  \;+\;
  \sum_{r=0}^{m}Q^{\,r^{2}}
     \genfrac{[}{]}{0pt}{0}{m+r}{2r}_{Q}\,O_0 .
\end{equation}
\end{lemma}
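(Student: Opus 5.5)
The plan is induction on $m$, proving a companion formula for $E_m$ at the same time, since \eqref{eq:EOrec} couples the two sequences. By linearity of \eqref{eq:EOrec} in the initial data, I write $E_m=a_mE_0+b_mO_0$ and $O_m=c_mE_0+d_mO_0$. The coefficient vectors $(a_m,b_m)$ and $(c_m,d_m)$ then obey the same recurrences,
\[
  a_{m+1}=a_m-Q^{m}c_m,\qquad c_{m+1}=c_m-Q^{m+1}a_{m+1},
\]
and similarly for $b,d$, starting from $a_0=d_0=1$ and $b_0=c_0=0$.

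The first step is to guess the companion closed forms. By analogy with \eqref{eq:Om} these should be
\[
  a_m=\sum_{r=0}^{m}Q^{r^2}\genfrac{[}{]}{0pt}{0}{m+r-1}{2r}_Q
  \quad\text{(with the $m=0$ term read as $1$)},
  \qquad
  b_m=-\sum_{r=0}^{m-1}Q^{r^2+r}\genfrac{[}{]}{0pt}{0}{m+r}{2r+1}_Q .
\]
I would fix the exact shifts by computing $m=1,2$ by hand. For example, $a_1=1$ and $b_1=-1$ give $E_1=E_0-O_0$. Then $c_1=-Q$ and $d_1=1+Q$, which match \eqref{eq:Om}, since $\genfrac{[}{]}{0pt}{1}{1}{1}=1$ and $\genfrac{[}{]}{0pt}{1}{1}{0}+Q\genfrac{[}{]}{0pt}{1}{2}{2}=1+Q$.

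The second step is the inductive verification. Assuming the four formulas at level $m$, I substitute them into $a_{m+1}=a_m-Q^mc_m$ and $c_{m+1}=c_m-Q^{m+1}a_{m+1}$ (and likewise for $b,d$). After reindexing, each equation collapses termwise in $r$ to a $Q$-Pascal identity of the form
\[
  \genfrac{[}{]}{0pt}{0}{N}{k}_Q=\genfrac{[}{]}{0pt}{0}{N-1}{k}_Q+Q^{N-k}\genfrac{[}{]}{0pt}{0}{N-1}{k-1}_Q
  \quad\text{or}\quad
  \genfrac{[}{]}{0pt}{0}{N}{k}_Q=Q^{k}\genfrac{[}{]}{0pt}{0}{N-1}{k}_Q+\genfrac{[}{]}{0pt}{0}{N-1}{k-1}_Q .
\]
The two forms are needed because the steps alternate: the even step uses the power $Q^m$ and the odd step uses $Q^{m+1}$.

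An alternative that avoids guessing $E_m$ is to eliminate $E$ and work with $O$ alone. From the two relations one gets a three-term recurrence
\[
  O_{m+1}=(1+Q^{m+1})O_m - Q\,O_{m-1}\cdot(\text{a monomial}),
\]
which I would derive by expressing $E_{m}$ through $O_m-O_{m-1}$. The coefficient sums in \eqref{eq:Om} can then be checked directly against this second-order recurrence, with the two initial values $O_0$ and $O_1=-QE_0+(1+Q)O_0$.

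The main obstacle is the bookkeeping of the $Q$-powers in the Pascal step. In particular, the factor $Q^{r^2}$ versus $Q^{r^2+2r}$ must combine with $Q^{m}$ or $Q^{m+1}$ to give exactly $Q^{N-k}$ or $Q^k$, and the boundary terms $r=m$ (or $r=m-1$), where the Gaussian binomials degenerate to $1$, must be checked separately.
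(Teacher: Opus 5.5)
Your strategy matches the paper's proof: a simultaneous induction on the coupled first-order system \eqref{eq:EOrec}, with closed forms for all four coefficient sequences, each step reduced termwise to a $Q$-Pascal rule. However, the companion formula you write down for the $E_0$-coefficient of $E_m$ is wrong, and with it the induction does not close.

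From \eqref{eq:EOrec} one gets $E_1=E_0-O_0$, $O_1=-QE_0+(1+Q)O_0$, and $E_2=E_1-QO_1=(1+Q^{2})E_0-(1+Q+Q^{2})O_0$, so $a_2=1+Q^{2}$. Your formula instead gives $\genfrac{[}{]}{0pt}{1}{1}{0}_Q+Q\genfrac{[}{]}{0pt}{1}{2}{2}_Q=1+Q$. Your hand check at $m=1$ cannot detect this, since both versions give $a_1=1$.

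Because $c_{m+1}=c_m-Q^{m+1}a_{m+1}$, the error feeds directly into the first sum of \eqref{eq:Om}. Your $a_2$ would give $c_2=-Q-Q^{2}-Q^{3}$, whereas \eqref{eq:Om} correctly gives $c_2=-Q\bigl(\genfrac{[}{]}{0pt}{1}{2}{1}_Q+Q^{3}\bigr)=-Q-Q^{2}-Q^{4}$. So the claimed termwise collapse to Pascal fails for the $(a,c)$ pair. The correct companion is
\[
  a_m=\sum_{r\ge0}Q^{r^{2}+r}\genfrac{[}{]}{0pt}{0}{m+r-1}{2r}_Q ,
\]
and your $b_m$ is right. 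With this correction, all four inductive steps are termwise instances of the single rule $\genfrac{[}{]}{0pt}{1}{N}{k}_Q=\genfrac{[}{]}{0pt}{1}{N-1}{k}_Q+Q^{N-k}\genfrac{[}{]}{0pt}{1}{N-1}{k-1}_Q$, after the shift $r\mapsto r-1$ in the cross terms. For example, $a_{m+1}=a_m-Q^{m}c_m$ is $Q^{r^{2}+r}$ times $\genfrac{[}{]}{0pt}{1}{m+r}{2r}_Q=\genfrac{[}{]}{0pt}{1}{m+r-1}{2r}_Q+Q^{m-r}\genfrac{[}{]}{0pt}{1}{m+r-1}{2r-1}_Q$. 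So your second Pascal form, and the ``alternating'' bookkeeping, are never needed; this is what the paper uses.

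Your fallback route is also misstated. Substituting $E_m=Q^{-m}(O_{m-1}-O_m)$ into $E_{m+1}=E_m-Q^{m}O_m$ gives
\[
  O_{m+1}=\bigl(1+Q+Q^{2m+1}\bigr)O_m-Q\,O_{m-1}\qquad(m\ge1).
\]
No recurrence of your shape $(1+Q^{m+1})O_m-\mu\,Q\,O_{m-1}$, with $\mu$ a monomial, can hold. At $m=1$, comparing $E_0$-coefficients (where $c_0=0$) would force $c_2=-Q(1+Q^{2})$, contradicting the value above. With the correct three-term recurrence this route does work. But checking the sums in \eqref{eq:Om} against it requires combining two Pascal steps rather than one. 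Once $a_m$ is fixed, the coupled first-order induction is the cleaner proof.
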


\begin{proof}
Write $O_m=\alpha_mE_0+\beta_mO_0$ and
$E_m=\gamma_mE_0+\delta_mO_0$, with the four recursions induced by
\eqref{eq:EOrec}. The claimed
$\alpha_m=-Q\sum_{r<m}Q^{r^{2}+2r}\genfrac{[}{]}{0pt}{1}{m+r}{2r+1}_Q$
and
$\beta_m=\sum_{r\le m}Q^{r^{2}}\genfrac{[}{]}{0pt}{1}{m+r}{2r}_Q$
hold at $m=0$; the inductive step is the Gaussian--Pascal rule in the
forms
$\genfrac{[}{]}{0pt}{1}{m+1+r}{2r+1}_Q
=\genfrac{[}{]}{0pt}{1}{m+r}{2r+1}_Q
+Q^{\,m-r}\genfrac{[}{]}{0pt}{1}{m+r}{2r}_Q$
and its even-index analogue. The bookkeeping is also checked as an exact
polynomial identity for $m\le20$ by two independent implementations.
\end{proof}

Since each Gaussian binomial stabilizes and only finitely many $r$
contribute per degree,
\begin{equation}\label{eq:bwd-lin}
  \lim_{m\to\infty}O_m
  =-\,Q\Biggl(\sum_{r\ge0}\frac{Q^{r^{2}+2r}}{(Q;Q)_{2r+1}}\Biggr)\FF_0
  +q\Biggl(\sum_{r\ge0}\frac{Q^{r^{2}}}{(Q;Q)_{2r}}\Biggr)\FF_1 .
\end{equation}
The two interior sums are Slater's identities S.~96 and S.~79
\cite{Slater52}:
\begin{align}
  \sum_{r\ge0}\frac{Q^{r^{2}}}{(Q;Q)_{2r}}
  &=\frac{J_2(Q)\,J_{20}(Q)}{J_1(Q)\,J_{4,20}(Q)}
   =\Omega(Q)\,G(Q^{4})=D(Q),
  \label{eq:S79}\\
  \sum_{r\ge0}\frac{Q^{r^{2}+2r}}{(Q;Q)_{2r+1}}
  &=\frac{J_2(Q)\,J_{20}(Q)}{J_1(Q)\,J_{8,20}(Q)}
   =\Omega(Q)\,H(Q^{4})=S(Q),
  \label{eq:S96}
\end{align}
the middle products written in the variable $Q$ (both re-verified
independently through $Q^{60}$). Consequently
\begin{equation}\label{eq:bwd-lin2}
  \lim_{m\to\infty}O_m
  \;=\;-\,Q\,S(Q)\,\FF_0+q\,D(Q)\,\FF_1 .
\end{equation}

\section{The forward tropical ray}
\label{sec:forward}

We now compute $\lim H_n$ a second time, directly from the Nahm sum, as
an instance of Theorem~\ref{thm:tropical}. Since $B_{-N}=B_0+N(-d)$,
the sequence $(\FF_{-N})_{N\ge0}$ is the ray with data
\[
  \beta=B_0,
  \qquad
  \delta=-d=(-2,1,2)^{T}.
\]
The machine-selected velocity is $v=(2,1,0)^{T}$
(Section~\ref{sec:ml}), and one checks in two lines that it solves
\eqref{eq:TF} \emph{exactly}, in the interior regime:
\[
  Av=\begin{pmatrix}3&-4&-3\\-4&7&4\\-3&4&8\end{pmatrix}
     \begin{pmatrix}2\\1\\0\end{pmatrix}
    =\begin{pmatrix}2\\-1\\-2\end{pmatrix}=d,
  \qquad
  w=Av+\delta=d-d=0 ,
\]
so $I=\{1,2\}$, $J=\{3\}$, $\mathcal K=\varnothing$: the first two
coordinates become bilateral and the third remains unilateral.
Moreover the tropical normalizer coincides with the quadratic gauge of
Lemma~\ref{lem:gauge}:
\[
  \tfrac12 v^{T}Av=\tfrac12 v^{T}d=\tfrac32,
  \qquad
  \beta^{T}v=-\tfrac12,
  \qquad\text{so}\qquad
  q^{\frac12N^{2}v^{T}Av-N\beta^{T}v}
  =q^{\,N(3N+1)/2},
\]
and $q^{N(3N+1)/2}\FF_{-N}=G_{-N}=H_N$.

\begin{proposition}[Forward boundary value]\label{prop:fwd}
Coefficientwise,
\begin{equation}\label{eq:Lminus}
  L_-\;:=\;\lim_{N\to\infty}H_N
  \;=\;\frac{\Omega(Q)}{(Q;Q)_\infty^{2}}\;\Tm(q).
\end{equation}
\end{proposition}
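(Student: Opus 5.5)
We apply Theorem~\ref{thm:tropical} to the ray $\FF_{-N}=\NN_{A,5}(B_0+N\delta)$, with $\delta=-d$ and the velocity $v=(2,1,0)^{T}$ verified above. Since $w=0$, we are in the interior regime: $I=\{1,2\}$, $J=\{3\}$, $\mathcal K=\varnothing$. We have already checked that the tropical normalizer equals $q^{N(3N+1)/2}$, so its product with $\FF_{-N}$ is exactly $H_N$. The theorem then gives, coefficientwise,
\[
  L_-=\lim_{N\to\infty}H_N=\frac{1}{(Q;Q)_\infty^{2}}
  \sum_{R,S\in\Z}\ \sum_{k\ge0}
  \frac{q^{\frac12u^{T}Au+B_0^{T}u}}{(Q;Q)_k},
  \qquad u=(R,S,k).
\]
What remains is to evaluate the inner triple sum and show that it equals $\Omega(Q)\Tm(q)$.

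The plan is to complete the square in the bilateral variables $(R,S)$ for each fixed $k$. We expand
\[
\tfrac12u^{T}Au+B_0^{T}u=\tfrac12(3R^2-8RS+7S^2)+\tfrac{R}{2}-\tfrac{3S}{2}+k(-3R+4S)+4k^2+2k .
\]
The linear term in $k$ is $k\,(-3,4)\cdot(R,S)$. We want an integral shift $(R,S)\mapsto(R,S)+k(a,b)$ that absorbs it, which requires
\[
\begin{pmatrix}3&-4\\-4&7\end{pmatrix}\begin{pmatrix}a\\b\end{pmatrix}=\begin{pmatrix}3\\-4\end{pmatrix}.
\]
The solution is $(a,b)=(1,0)$, which is integral, so the shift preserves $\Z^2$.

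Substituting $R\mapsto R+k$ changes the quadratic form by $k(3R-4S)+\tfrac32k^2$ and the linear part by $\tfrac{k}{2}$. The cross terms cancel with $k(-3R+4S)$. The exponent becomes
\[
\tfrac12(3R^2-8RS+7S^2+R-3S)+\tfrac32k^2+\tfrac k2-3k^2+4k^2+2k,
\]
and the $k$-part is $\tfrac52k^2+\tfrac52k=5\binom{k+1}{2}$. I will recheck this arithmetic carefully, since it is the main place an error could occur. The triple sum therefore factorizes as
\[
\Tm(q)\sum_{k\ge0}\frac{Q^{k(k+1)/2}}{(Q;Q)_k}.
\]
By Euler's identity, $\sum_{k\ge0}Q^{k(k+1)/2}/(Q;Q)_k=(-Q;Q)_\infty=\Omega(Q)$, which gives \eqref{eq:Lminus}.

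Interchanging the bilateral and unilateral summations and reindexing are legitimate coefficientwise. The reason is that $\rho(u)$ is positive definite, so only finitely many lattice points contribute to any fixed power of $q$, exactly as in the proof of Theorem~\ref{thm:tropical}. The main obstacle is the bookkeeping of the completed square: checking that the shift vector is integral and that the residual $k$-exponent is exactly $5k(k+1)/2$. If that computation failed, for instance by producing a non-integral shift, I would instead split $k$ by residue classes and obtain several theta pieces. The integrality of $(1,0)$ shows this is not needed.
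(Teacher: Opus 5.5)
Your proof is correct and follows the paper's argument essentially verbatim. You apply Theorem~\ref{thm:tropical} with $v=(2,1,0)^{T}$ (interior regime $w=0$), shift the first bilateral variable by the unilateral one ($r=R+u$ in the paper, $R\mapsto R+k$ in yours) so that the cross terms cancel and the exponent separates as the $\Tm$ form plus $5k(k+1)/2$, and then conclude with Euler's identity $\sum_{k\ge0}Q^{k(k+1)/2}/(Q;Q)_k=\Omega(Q)$; your completed-square arithmetic checks out.
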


\begin{proof}
By Theorem~\ref{thm:tropical} with the data above,
\[
  L_-=\frac{1}{(Q;Q)_\infty^{2}}
  \sum_{\substack{u_1,u_2\in\Z\\ u_3\ge0}}
  \frac{q^{\frac12u^{T}Au+B_0^{T}u}}{(Q;Q)_{u_3}} .
\]
Writing $u=(r,s,u)$, the residual exponent is
$E_-(r,s,u)$ of \eqref{eq:Eminus} below
(Lemma~\ref{lem:fwd-exp}), and the substitution $r=R+u$ separates it as
the binary form of \eqref{eq:ThetaM} plus $5u(u+1)/2$. Euler's identity
$\sum_{u\ge0}Q^{u(u+1)/2}/(Q;Q)_u=(-Q;Q)_\infty=\Omega(Q)$
\cite{Andrews98} then factors the sum as
$\Omega(Q)\,\Tm(q)/(Q;Q)_\infty^{2}$.
\end{proof}

\begin{lemma}[Forward exponent identities]\label{lem:fwd-exp}
With $u=(r,s,u)$,
\begin{equation}\label{eq:Eminus}
  \tfrac12u^{T}Au+B_0^{T}u
  =E_-(r,s,u)
  :=\tfrac12\bigl(3r^{2}-8rs-6ru+r+7s^{2}+8su-3s+8u^{2}+4u\bigr),
\end{equation}
and
\begin{equation}\label{eq:separation}
  E_-(R+u,S,u)
  =\frac{3R^{2}-8RS+R+7S^{2}-3S}{2}
  +\frac{5u(u+1)}{2}.
\end{equation}
\end{lemma}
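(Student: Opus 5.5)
Both identities are finite polynomial computations in $\Q[r,s,u]$, so the plan is to expand each side and compare coefficients monomial by monomial.

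\emph{Step 1: the quadratic part.} For $u=(r,s,u)^{T}$,
\[
\tfrac12u^{T}Au=\tfrac12\bigl(3r^{2}+7s^{2}+8u^{2}\bigr)+A_{12}rs+A_{13}ru+A_{23}su .
\]
With $A_{12}=-4$, $A_{13}=-3$, $A_{23}=4$ this equals
\[
\tfrac12\bigl(3r^{2}-8rs-6ru+7s^{2}+8su+8u^{2}\bigr).
\]

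\emph{Step 2: the linear part.} With $B_0=(\tfrac12,-\tfrac32,2)^{T}$ from \eqref{eq:Bt-affine},
\[
B_0^{T}u=\tfrac12r-\tfrac32s+2u=\tfrac12(r-3s+4u).
\]
Adding Steps 1 and 2 gives exactly $E_-(r,s,u)$, which proves \eqref{eq:Eminus}.

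\emph{Step 3: the separation identity \eqref{eq:separation}.} Substitute $r=R+u$, $s=S$ into $2E_-$ and group the terms by their $u$-degree.
\begin{itemize}
\item From $3r^{2}$ we get $3R^{2}+6Ru+3u^{2}$.
\item From $-8rs$ we get $-8RS-8Su$.
\item From $-6ru$ we get $-6Ru-6u^{2}$.
\item From $r$ we get $R+u$.
\item The remaining terms are $7S^{2}+8Su-3S+8u^{2}+4u$.
\end{itemize}
The mixed terms cancel: $6Ru-6Ru=0$ and $-8Su+8Su=0$. The pure-$u$ terms collect to $(3-6+8)u^{2}+(1+4)u=5u^{2}+5u$. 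What remains is $3R^{2}-8RS+R+7S^{2}-3S$. Halving gives \eqref{eq:separation}.

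\emph{Why the separation works.} The cancellation of cross terms is exactly the statement that $(1,0,1)$ spans the $A$-orthogonal complement of the $(r,s)$-plane. Indeed, $A(1,0,1)^{T}=(0,0,5)^{T}$, so $(1,0,1)^{T}A(1,0,1)=5$. This explains both the vanishing of the mixed terms and the coefficient $5u(u+1)/2$.

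The only possible obstacle is sign bookkeeping in Step 3; there is no conceptual difficulty. A quick consistency check is available: setting $R=S=0$ gives $E_-(u,0,u)=5u(u+1)/2$, which is a nonnegative integer. This matches both the integrality claim made for $\Tm$ and the Euler-sum factorization used in Proposition~\ref{prop:fwd}.
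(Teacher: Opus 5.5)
Your proposal is correct and is essentially the paper's proof: both identities are checked by direct polynomial expansion, and Appendix~\ref{app:exponents} collects the same coefficients ($3-6+8=5$ for $u^{2}$, $1+4=5$ for $u$, and $6-6=0$, $-8+8=0$ for the mixed $Ru$, $Su$ terms). Your $A$-orthogonality remark correctly explains why the mixed terms cancel and why the $u^{2}$ coefficient is $\tfrac52$, but it does not account for the linear term: the $\tfrac52u$ comes separately from $B_0^{T}(1,0,1)^{T}=\tfrac12+2=\tfrac52$.
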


\begin{proof}
Both are direct polynomial expansions
(Appendix~\ref{app:exponents}); the essential feature of
\eqref{eq:separation} is that every cross term between $(R,S)$ and $u$
cancels.
\end{proof}

Combining \eqref{eq:fwd-lin} with \eqref{eq:Lminus} yields the first
boundary equation:
\begin{equation}\label{eq:E1}
  \;G(Q)\,\FF_0+q\,H(Q)\,\FF_1
  \;=\;\frac{\Omega(Q)}{(Q;Q)_\infty^{2}}\,\Tm(q).\;
\end{equation}

\subsection{Product decomposition of the binary theta series}
For the modular certificates we also need $\Tm$ as a finite combination
of theta products.

\begin{proposition}[Three-coset decomposition]\label{prop:coset}
\begin{equation}\label{eq:coset}
  \Tm(q)=j(-q^{2};q^{3})\,j(-q^{5};q^{15})
        +j(-q;q^{3})\,j(-q^{10};q^{15})
        +q^{2}\,j(-1;q^{3})\,j(-1;q^{15}).
\end{equation}
\end{proposition}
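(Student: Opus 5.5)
The plan is to diagonalize the quadratic form and then split the lattice into three cosets. Each coset should factor as a product of two unary theta series, and the Jacobi triple product in the form
\[
  j(-x;R)=\sum_{n\in\Z}R^{n(n-1)/2}x^{n}
\]
(all signs positive) identifies each unary series with one of the $j$-factors in \eqref{eq:coset}.

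\textbf{Step 1: change of variables.} Multiplying the quadratic part by $3$ gives $3(3R^{2}-8RS+7S^{2})=(3R-4S)^{2}+5S^{2}$. Set $x=3R-4S$. Then $R=(x+4S)/3$, so the linear part becomes $(R-3S)/2=(x-5S)/6$. The exponent in \eqref{eq:ThetaM} then splits with no cross term:
\[
  \frac{3R^{2}-8RS+R+7S^{2}-3S}{2}=\frac{x^{2}+x}{6}+\frac{5(S^{2}-S)}{6}.
\]
The map $(R,S)\mapsto(x,S)$ is a bijection from $\Z^{2}$ onto the set of pairs $(x,S)\in\Z^{2}$ with $x\equiv -S\pmod 3$, since $-4S\equiv -S$.

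\textbf{Step 2: coset decomposition.} Split according to $S\equiv\varepsilon\pmod 3$ with $\varepsilon\in\{0,1,2\}$. Write $S=3k+\varepsilon$ and $x=3l-\varepsilon$ with $k,l\in\Z$ free. On each coset the sum factors as a product of a sum over $l$ and a sum over $k$. I then read off each factor in triple-product form.

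For $\varepsilon=0$, the $x$-part is $(9l^{2}+3l)/6=3\binom{l}{2}+2l$, which gives $j(-q^{2};q^{3})$. The $S$-part is $(45k^{2}-15k)/6=15\binom{k}{2}+5k$, which gives $j(-q^{5};q^{15})$. This is exactly the first term of \eqref{eq:coset}.

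For $\varepsilon=1$ and $\varepsilon=2$ the same expansion gives exponents of the form $3\binom{l}{2}+al+c$ and $15\binom{k}{2}+bk+c'$. After reindexing $l\mapsto -l$ or $l\mapsto l+1$ where needed (and likewise for $k$), I expect to obtain $j(-q;q^{3})\,j(-q^{10};q^{15})$ from one residue class and $q^{2}\,j(-1;q^{3})\,j(-1;q^{15})$ from the other. Two facts will help match the normalizations: the symmetry $j(-x;R)=j(-R/x;R)$, and the identity $j(-1;R)=2\sum_{n\ge1}R^{n(n-1)/2}$ (every value $n(n-1)/2$ is attained twice). Together they absorb the constant shifts $c+c'$ into the prefactor $q^{2}$.

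\textbf{Main obstacle.} The only delicate step is the bookkeeping of the constant terms and linear coefficients in the $\varepsilon=1,2$ cosets, so that the three products match \eqref{eq:coset} on the nose. In particular I must check which residue class carries the factor $q^{2}$ and that its two $j$-factors are exactly $j(-1;q^{3})\,j(-1;q^{15})$. I would verify this by expanding the two quadratics explicitly, in the style of Appendix~\ref{app:exponents}. As a consistency check I would compare a few leading coefficients with a direct enumeration of \eqref{eq:ThetaM}, for instance the constant term and the coefficients of $q^{1}$ and $q^{2}$.
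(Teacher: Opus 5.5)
Your proposal is the paper's proof. It uses the same substitution $x=3R-4S$, giving exponent $(x^{2}+x+5S^{2}-5S)/6$, the same coset parametrization $x=3l-\varepsilon$, $S=3k+\varepsilon$, and the Jacobi triple product applied term by term. The bookkeeping you deferred works out as you expect: $\varepsilon=1$ gives exponent $\frac{3l^{2}-l}{2}+\frac{15k^{2}+5k}{2}$, hence $j(-q;q^{3})\,j(-q^{10};q^{15})$ directly, while $\varepsilon=2$ gives $\frac{3l^{2}-3l}{2}+\frac{15k^{2}+15k}{2}+2$, hence $q^{2}\,j(-1;q^{3})\,j(-1;q^{15})$ after $k\mapsto -k$; the factor $q^{2}$ arises simply because the constants $\tfrac13$ and $\tfrac53$ of the two factors add to $2$.
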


\begin{proof}
Set $x=3R-4S$. Then
$3R^{2}-8RS+7S^{2}=(x^{2}+5S^{2})/3$ and $R-3S=(x-5S)/3$, so the
exponent in \eqref{eq:ThetaM} equals $(x^{2}+x+5S^{2}-5S)/6$. The map
$(R,S)\mapsto(x,S)$ is a bijection of $\Z^{2}$ onto
$\{(x,S):x+S\equiv0\ (\mathrm{mod}\ 3)\}$, partitioned by the cosets
$x=3u-c$, $S=3v+c$, $c\in\{0,1,2\}$. Substituting,
\[
  \frac{x^{2}+x+5S^{2}-5S}{6}
  =\begin{cases}
  \dfrac{3u^{2}+u}{2}+\dfrac{15v^{2}-5v}{2}, & c=0,\\[6pt]
  \dfrac{3u^{2}-u}{2}+\dfrac{15v^{2}+5v}{2}, & c=1,\\[6pt]
  \dfrac{3u^{2}-3u}{2}+\dfrac{15v^{2}+15v}{2}+2, & c=2,
  \end{cases}
\]
and the Jacobi triple product in the form
$j(-q^{a};q^{m})=\sum_{n\in\Z}q^{(mn^{2}+(2a-m)n)/2}$ identifies the
three double sums with the three products. This is purely a lattice
decomposition; as an implementation check it was re-verified
coefficientwise through $q^{200}$.
\end{proof}

\section{The backward tropical ray}
\label{sec:backward}

Unwinding \eqref{eq:EOdef} and \eqref{eq:gauge},
\begin{equation}\label{eq:Om-unwind}
  O_m=Q^{-m(m+1)}G_{2m+1}=q^{\,m^{2}+1}\,\FF_{2m+1}
  \qquad(m\ge0).
\end{equation}
Since $\lim_mO_m$ exists coefficientwise
(Lemma~\ref{lem:continuant}), it may be computed along $m=2h$, i.e.\
$O_{2h}=q^{4h^{2}+1}\FF_{4h+1}$.

Here Remark~\ref{rem:subfamily} becomes essential. For the step-one
direction $\delta=d$ the candidate velocity $v=e_3$ is
\emph{inadmissible}:
\[
  Ae_3+d=(-3,4,8)^{T}+(2,-1,-2)^{T}=(-1,3,6)^{T}\;\not\ge\;0 .
\]
Passing to the arithmetic subfamily $t=4h+1$, i.e.\ the ray
$\beta=B_1$, $\delta=4d=(8,-4,-8)^{T}$, the same velocity becomes an
admissible boundary-regime solution of \eqref{eq:TF}:
\[
  w=Ae_3+4d=(-3,4,8)^{T}+(8,-4,-8)^{T}=(5,0,0)^{T}\ge0,
  \qquad
  v_jw_j=0,
\]
with $I=\{3\}$, $J=\{2\}$, $\mathcal K=\{1\}$: the third coordinate
becomes bilateral, the second remains unilateral, and the first is
forced to the face $i=0$, with $hw_1u_1=5hi$ realizing the forcing.
(This explains both the modulus $4$ and the offset $1$ found by the ray
search of Section~\ref{sec:ml}.) The tropical normalizer is
$\tfrac12v^{T}Av=4$ and $\beta^{T}v=(B_1)_3=0$, i.e.\
$q^{4h^{2}}\FF_{4h+1}=O_{2h}/q$.

\begin{proposition}[Backward boundary value]\label{prop:bwd}
Coefficientwise,
\begin{equation}\label{eq:Lplus}
  L_+\;:=\;\lim_{m\to\infty}O_m
  \;=\;q\,\frac{\Omega(Q)}{(Q;Q)_\infty}\,\vartheta(q).
\end{equation}
\end{proposition}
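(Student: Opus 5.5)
The plan is to evaluate $\lim_m O_m$ along the subsequence $m=2h$ and to identify that limit through Theorem~\ref{thm:tropical}, applied to the arithmetic subfamily already set up above. Since $\lim_m O_m$ exists coefficientwise by Lemma~\ref{lem:continuant} and \eqref{eq:bwd-lin2}, it equals $\lim_h O_{2h}$. By \eqref{eq:Om-unwind}, $O_{2h}=q\cdot q^{4h^{2}}\FF_{4h+1}$. For the ray $\beta=B_1=(\tfrac52,-\tfrac52,0)^{T}$, $\delta=4d$, $v=e_3$, we have the complementarity data $w=(5,0,0)^{T}$, $I=\{3\}$, $J=\{2\}$, $\mathcal K=\{1\}$, and the normalizer $q^{4h^{2}}$ (because $\beta^{T}v=0$). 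Theorem~\ref{thm:tropical} therefore gives
\[
  q^{4h^{2}}\FF_{4h+1}\longrightarrow
  \frac{1}{(Q;Q)_\infty}\sum_{k\in\Z}\sum_{s\ge0}
  \frac{q^{\frac12(7s^{2}+8sk+8k^{2})-\frac52 s}}{(Q;Q)_s},
\]
where I write $u=(0,s,k)$ and use $A_{22}=7$, $A_{23}=4$, $A_{33}=8$. It then remains to show that the double sum equals $\Omega(Q)\,\vartheta(q)$.

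For that step I complete the square in the bilateral variable. The exponent is
\[
  \tfrac72s^{2}+4sk+4k^{2}-\tfrac52s=(2k+s)^{2}+\tfrac52 s(s-1),
\]
so with $n=2k+s$ the inner sum over $k\in\Z$ becomes $\theta_{\bar s}(q):=\sum_{n\equiv s\ (2)}q^{n^{2}}$, and the term is weighted by $Q^{s(s-1)/2}/(Q;Q)_s$. Splitting by the parity of $s$, the double sum equals $E_{\mathrm{ev}}\theta_0+E_{\mathrm{od}}\theta_1$, where $E_{\mathrm{ev}}$ and $E_{\mathrm{od}}$ are the even and odd parts of $\sum_{s\ge0}Q^{\binom s2}z^{s}/(Q;Q)_s=(-z;Q)_\infty$ at $z=1$.

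Euler's identity at $z=1$ gives $E_{\mathrm{ev}}+E_{\mathrm{od}}=(-1;Q)_\infty=2\Omega(Q)$. At $z=-1$ it gives $E_{\mathrm{ev}}-E_{\mathrm{od}}=(1;Q)_\infty=0$, since that product has the factor $1-Q^{0}$. Hence $E_{\mathrm{ev}}=E_{\mathrm{od}}=\Omega(Q)$, and the sum collapses to $\Omega(Q)(\theta_0+\theta_1)=\Omega(Q)\,\vartheta(q)$. Multiplying by the prefactor $q$ from $O_{2h}=q\cdot q^{4h^{2}}\FF_{4h+1}$ yields \eqref{eq:Lplus}.

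The step I expect to need the most care is this parity decoupling, because the bilateral variable $k$ is coupled to the unilateral variable $s$ through the cross term $4sk$. A plain substitution $k\mapsto k-s/2$ is illegal when $s$ is odd, and the coset bookkeeping is exactly what the vanishing of $(1;Q)_\infty$ rescues. I would also double-check the hypotheses of Theorem~\ref{thm:tropical} for this ray, namely $w\ge0$ and $v_jw_j=0$ (both displayed above), and check that $B_1$ is the correct base vector for $t=4h+1$. As a sanity check, I would compare \eqref{eq:Lplus} against the coefficients of $O_m$ for moderate $m$.
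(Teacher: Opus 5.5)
Your proposal is correct and is essentially the paper's own proof: you use the same subsequence $O_{2h}=q^{4h^{2}+1}\FF_{4h+1}$ and apply Theorem~\ref{thm:tropical} to the same ray $\beta=B_1$, $\delta=4d$, $v=e_3$, then close with the same parity split and Euler's identity at $z=\pm1$, which forces both parity parts to equal $\Omega(Q)$. Your completed square $(2k+s)^{2}+\tfrac52 s(s-1)$ is just a compact form of the paper's substitution $u=v-r$ in the two parity classes $j=2r$ and $j=2r+1$.
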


\begin{proof}
By Theorem~\ref{thm:tropical} with the data above,
\[
  L_+
  =q\cdot\frac{1}{(Q;Q)_\infty}
  \sum_{\substack{j\ge0,\;u\in\Z}}
  \frac{q^{\frac12(0,j,u)A(0,j,u)^{T}+B_1^{T}(0,j,u)}}{(Q;Q)_{j}}
  =\frac{1}{(Q;Q)_\infty}
  \sum_{j\ge0}\sum_{u\in\Z}
  \frac{q^{(7j^{2}+8ju-5j+8u^{2}+2)/2}}{(Q;Q)_j},
\]
the residual exponent being computed in
Appendix~\ref{app:exponents}. Split $j$ by parity with $u=v-r$: for
$j=2r$ the exponent equals $5r(2r-1)+(2v)^{2}+1$, and for $j=2r+1$ it
equals $5r(2r+1)+(2v+1)^{2}+1$. Hence
\[
  L_+=\frac{q}{(Q;Q)_\infty}
  \Bigl(\mathcal A(Q)\!\sum_{v\in\Z}q^{(2v)^{2}}
  +\mathcal B(Q)\!\sum_{v\in\Z}q^{(2v+1)^{2}}\Bigr),
  \quad
  \mathcal A=\sum_{r\ge0}\tfrac{Q^{r(2r-1)}}{(Q;Q)_{2r}},\;\;
  \mathcal B=\sum_{r\ge0}\tfrac{Q^{r(2r+1)}}{(Q;Q)_{2r+1}}.
\]
Euler's identity
$\sum_{n\ge0}Q^{n(n-1)/2}z^{n}/(Q;Q)_n=(-z;Q)_\infty$ at $z=\pm1$,
separated by parity, gives
$\mathcal A+\mathcal B=(-1;Q)_\infty=2\Omega(Q)$ and
$\mathcal A-\mathcal B=(1;Q)_\infty=0$, so
$\mathcal A=\mathcal B=\Omega(Q)$ and the two theta sums recombine into
$\vartheta(q)$.
\end{proof}

Combining \eqref{eq:bwd-lin2} with \eqref{eq:Lplus} yields the second
boundary equation:
\begin{equation}\label{eq:E2}
\;-\,Q\,S(Q)\,\FF_0+q\,D(Q)\,\FF_1
  \;=\;q\,\frac{\Omega(Q)}{(Q;Q)_\infty}\,\vartheta(q).\;
\end{equation}

\section{The boundary system and the reduction of Theorem~\ref{thm:main}}
\label{sec:system}

Equations \eqref{eq:E1}--\eqref{eq:E2} are exactly
Theorem~\ref{thm:determination} with $s=2$: the fundamental solutions in
the gauged normalization are $(\mathcal C_n,\,q\mathcal B_n)$ forward
and the continuant coefficients of Lemma~\ref{lem:continuant} backward,
with limit matrix
\begin{equation}\label{eq:system}
  L=\begin{pmatrix}
    G(Q) & q\,H(Q)\\
    -Q\,S(Q) & q\,D(Q)
  \end{pmatrix},
  \qquad
  L\begin{pmatrix}\FF_0\\ \FF_1\end{pmatrix}
  =\begin{pmatrix}L_-\\ L_+\end{pmatrix}.
\end{equation}

\begin{lemma}[Unit determinant]\label{lem:unit}
$\det L=q\,\Delta(Q)$ with
$\Delta(Q)=G(Q)D(Q)+Q\,H(Q)S(Q)\in 1+Q\,\Z[[Q]]$. In particular
$\det L$ is a unit of $\Q((q))$ and the system \eqref{eq:system} has a
unique solution.
\end{lemma}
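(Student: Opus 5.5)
The plan is to compute the determinant directly from the matrix \eqref{eq:system}:
\[
  \det L \;=\; G(Q)\cdot q\,D(Q)\;-\;q\,H(Q)\cdot\bigl(-Q\,S(Q)\bigr)
  \;=\;q\bigl(G(Q)D(Q)+Q\,H(Q)S(Q)\bigr)\;=\;q\,\Delta(Q).
\]
This identifies $\Delta$ as stated, so the only content is the claim $\Delta(Q)\in 1+Q\,\Z[[Q]]$ together with the unit and uniqueness consequences.

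For the membership claim I will read off constant terms and integrality from the series and product definitions \eqref{eq:GH}, \eqref{eq:OmDS}. Each of $G(Q)$, $H(Q)$ and $\Omega(Q)=(-Q;Q)_\infty$ lies in $1+Q\,\Z[[Q]]$, since it is a reciprocal of, or equal to, a product of factors $(1\pm Q^k)$ with $k\ge1$. The same holds for $G(Q^4)$ and $H(Q^4)$. Hence $D=\Omega\,G(Q^4)$ and $S=\Omega\,H(Q^4)$ lie in $1+Q\,\Z[[Q]]$. It follows that $G D\in 1+Q\,\Z[[Q]]$ and $Q\,HS\in Q\,\Z[[Q]]$, so their sum lies in $1+Q\,\Z[[Q]]$.

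For the unit claim, note that an element of $1+Q\,\Z[[Q]]$ is invertible in $\Z[[Q]]$ via the geometric series, and $q$ is invertible in $\Q((q))$. So $\det L=q\Delta$ is a unit of $\Q((q))$. Cramer's rule, with $L^{-1}=(\det L)^{-1}\operatorname{adj}L$, then gives uniqueness of the solution $(\FF_0,\FF_1)$ of \eqref{eq:system}. This is exactly the invertibility hypothesis required in Theorem~\ref{thm:determination}.

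I see no real obstacle. The only point needing care is the sign in the off-diagonal product, which is what makes the two terms add rather than cancel. If one wanted more, one could try to identify $\Delta$ as a product via the Rogers--Ramanujan identities, but that is not needed here.
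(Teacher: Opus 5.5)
Your proposal is correct and matches the paper's proof, which consists of the single observation that $G,H,D,S\in 1+Q\,\Z[[Q]]$ by the product forms in \eqref{eq:GH}--\eqref{eq:OmDS}. You also write out the determinant expansion (including the sign that makes the two terms add) and the Cramer's-rule uniqueness step, both of which the paper leaves implicit.
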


\begin{proof}
$G,H,D,S\in1+Q\Z[[Q]]$ by \eqref{eq:GH}--\eqref{eq:OmDS}.
\end{proof}

\begin{remark}[Closed form of the determinant]\label{rem:delta}
Although only the unit property is needed, the determinant is itself a
familiar modular unit: by \eqref{eq:OmDS},
$\Delta=\Omega(Q)\bigl(G(Q)G(Q^{4})+Q\,H(Q)H(Q^{4})\bigr)$, and the
classical identity
$G(q)G(q^{4})+q\,H(q)H(q^{4})=(-q;q^{2})_\infty^{2}$ --- one of
Ramanujan's forty identities for the Rogers--Ramanujan functions
\cite{BerndtForty} --- gives
\begin{equation}\label{eq:delta-closed}
  \Delta(Q)=\frac{\vartheta(Q)}{(Q;Q)_\infty}.
\end{equation}
(Re-verified independently through $q^{120}$; not part of the logical
chain.)
\end{remark}

\begin{proposition}[Product-side boundary equations]\label{prop:products}
Let $P_0,P_1$ denote the right sides of
\eqref{eq:mainF0}--\eqref{eq:mainF1}. Then
\begin{align}
  G(Q)\,P_0+q\,H(Q)\,P_1
  &=\frac{\Omega(Q)}{(Q;Q)_\infty^{2}}\,\Tm(q),
  \label{eq:P1id}\\
  -\,Q\,S(Q)\,P_0+q\,D(Q)\,P_1
  &=q\,\frac{\Omega(Q)}{(Q;Q)_\infty}\,\vartheta(q).
  \label{eq:P2id}
\end{align}
\end{proposition}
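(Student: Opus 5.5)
Both identities will be proved as identities of modular functions by the valence method, as announced in Corollary~\ref{cor:modular-cert}. First I rewrite every ingredient as a generalized eta quotient in the base variable $q$. From Section~\ref{subsec:products} we already have $G(Q)=J_{25}/J_{5,25}$, $H(Q)=J_{25}/J_{10,25}$, $\Omega(Q)=J_{10}/J_5$, $D(Q)=J_{10}J_{100}/(J_5J_{20,100})$, $S(Q)=J_{10}J_{100}/(J_5J_{40,100})$, and $(Q;Q)_\infty=J_5$. Also $\vartheta(q)=J_2^5/(J_1^2J_4^2)$. For $\Tm$ I use Proposition~\ref{prop:coset}: each $j(-q^a;q^m)$ equals $J_{2a,2m}J_{m}^{?}$-type quotients (explicitly $j(-x;R)=(x^2;R^2)_\infty$-style rewritings, e.g.\ $j(-q;q^3)=J_2J_3^2J_{12}/(J_1J_4J_6)$), so $\Tm$ becomes a sum of three eta quotients of level dividing $60$.

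Next, each of \eqref{eq:P1id} and \eqref{eq:P2id} is rearranged as $\sum_i c_i q^{e_i}\eta\text{-quotient}_i=0$. I divide by one convenient term (say the right-hand side) to get a sum of weight-zero generalized eta quotients $\sum_i f_i=1$. Here the levels are $\operatorname{lcm}(50,25,60)=300$ for \eqref{eq:P1id}, since $\Tm$ brings levels $3,15$. For \eqref{eq:P2id}, which involves $100$ through $D,S$ and $4$ through $\vartheta$, the level is $100$. This matches the groups $\Ggp(300)$ and $\Ggp(100)$ in the abstract. Using Robins' criterion for generalized eta quotients $\eta_{\delta,g}$, I check that each term is a modular function on $\Ggp(N)$. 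If the $q$-power prefactors or characters fail the congruence conditions, I adjust by multiplying through by a common quotient.

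Then I apply the Frye--Garvan form of the valence formula. For each cusp $\zeta\ne\infty$ of $\Ggp(N)$, I compute the orders $\ord_\zeta f_i$ by Ligozat/Robins formulas, and set $B=\sum_{\zeta\neq\infty}\min_i\min(\ord_\zeta f_i,0)$. If the expansion of $\sum_if_i-1$ vanishes at $\infty$ to order greater than $-B$, the identity holds. This reduces to checking the $q$-expansions of both sides of \eqref{eq:P1id}, \eqref{eq:P2id} to an explicit finite order. That check is exact integer arithmetic on products and on the theta series $\Tm$ and $\vartheta$.

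The main obstacle is the cusp bookkeeping. $\Ggp(300)$ has many cusps, and generalized eta quotients at level $300$ need careful width and order computations. The bound $-B$ may also be large, so the number of coefficients to check could be in the thousands. I would first try to lower the level, for instance by $5$-dissecting \eqref{eq:P1id} in the variable $q$ along residues mod $5$ using Proposition~\ref{prop:coset}. Failing that, I would accept level $300$ and automate the cusp-order table with two independent implementations.
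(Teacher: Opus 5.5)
This is essentially the paper's own proof: substitute Proposition~\ref{prop:coset} for $\Tm$ and $\vartheta=J_2^5/(J_1^2J_4^2)$, write the resulting nine- and seven-term product identities as weight-zero generalized-eta quotients on $\Ggp(300)$ and $\Ggp(100)$, and certify them by the Robins--Frye--Garvan valence bound. The paper obtains $B^{*}=1920$ and $216$ and checks the exact expansions through $q^{2005}$ and $q^{255}$. One small slip in your illustration: the correct formula is $j(-q;q^3)=J_2J_3^2/(J_1J_6)$, without your extra factor $J_{12}/J_4$. In general, $j(-q^a;q^m)=J_{2a,2m}J_m^{2}/(J_{a,m}J_{2m})$.
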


Proposition~\ref{prop:products} is proved in
Section~\ref{sec:certificates}.

\begin{proof}[Proof of Theorem~\ref{thm:main} assuming
Proposition~\ref{prop:products}]
Both $(\FF_0,\FF_1)$ and $(P_0,P_1)$ solve the system
\eqref{eq:system}; by Lemma~\ref{lem:unit} the solution is unique
(Corollary~\ref{cor:modular-cert}), so $\FF_0=P_0$ and $\FF_1=P_1$.
\end{proof}

\begin{remark}[Elimination formulas]\label{rem:elim}
Direct elimination, with \eqref{eq:delta-closed}, gives the closed
expressions
\begin{equation}\label{eq:elim}
  \FF_0=\frac{D\,L_--H\,L_+}{\Delta}
       =\frac{(Q;Q)_\infty\bigl(D\,L_--H\,L_+\bigr)}{\vartheta(Q)},
  \qquad
  \FF_1=\frac{Q\,S\,L_-+G\,L_+}{q\,\Delta},
\end{equation}
which explain conceptually why the conjectural products of
\cite{ShiWang26} occur as three-term $5$-dissections: both boundary
values are short theta combinations
(Proposition~\ref{prop:coset} and \eqref{eq:vartheta}), while the
determinant is a unit built from Rogers--Ramanujan products.
\end{remark}

\section{The two generalized-eta certificates}
\label{sec:certificates}

It remains to prove Proposition~\ref{prop:products}. Substituting
Proposition~\ref{prop:coset} for $\Tm$ and \eqref{eq:vartheta} for
$\vartheta$, each of \eqref{eq:P1id}--\eqref{eq:P2id} becomes an
identity among finitely many infinite products---nine terms for
\eqref{eq:P1id}, seven for \eqref{eq:P2id}---each a quotient of
Pochhammer symbols $(q^{a};q^{m})_\infty$. Such identities are decidable
by the valence method of Robins~\cite{Robins94} as automated by Frye and
Garvan~\cite{FryeGarvan19}; this is the instantiation of
Corollary~\ref{cor:modular-cert}.

\subsection{Generalized eta quotients and the valence criterion}
\label{subsec:valence}
For a level $N$ and residue $0<r<N$, the generalized Dedekind eta
function is
\begin{equation}\label{eq:geneta}
  \eta_{N;r}(\tau)=q^{\frac N2 P_2(r/N)}
  \prod_{\substack{n>0\\ n\equiv\pm r\,(\mathrm{mod}\,N)}}(1-q^{n}),
  \qquad
  P_2(x)=\{x\}^{2}-\{x\}+\tfrac16,
\end{equation}
with $q=e^{2\pi i\tau}$. Each term of our two identities, after
division by a fixed nonzero base term, is rewritten at a common level
$N$ (Appendix~\ref{app:base}) as
$q^{\delta}\prod_{0<r\le N/2}\eta_{N;r}(\tau)^{e_r}$. The exact
rational quantities
$v_0=\tfrac16\sum_re_r$ and $v_\infty=\sum_rNP_2(r/N)e_r$ are even
integers in every ratio occurring here, and the total Dedekind-eta
weight cancels, so each ratio is a modular function on $\Ggp(N)$
\cite{Robins94,FryeGarvan19}. At a reduced cusp $a/c$, the invariant
order is
\begin{equation}\label{eq:cusporder}
\begin{aligned}
  \ORD_{a/c}\,\eta_{N;r}
  &=w_{a/c}\,\Bigl(B(N,r;a,c)-B(3N,N;a,c)\Bigr),\\
  B(N,r;a,c)&=\frac{\gcd(N,c)^{2}}{2N}
  \left(\Bigl\{\frac{ar}{\gcd(N,c)}\Bigr\}-\frac12\right)^{2},
\end{aligned}
\end{equation}
where $w_{a/c}=N/\gcd(N,c)$ is the width (with the standard exceptional
convention at level $4$, which does not arise here). Writing
$f=1-\sum_{i\ge2}f_i$ for the identity normalized by its base term,
define
\begin{equation}\label{eq:Bstar}
  B^{*}
  =-\sum_{\substack{\text{cusps }a\not\sim\infty}}
  \min\bigl(0,\ORD_a f_2,\dots,\ORD_a f_s\bigr),
\end{equation}
over the inequivalent cusps of $\Ggp(N)$ other than $\infty$. If $f$ is
not identically zero, the valence formula bounds its order at $\infty$
by $B^{*}$; contrapositively, if the exact $q$-expansion of $f$ vanishes
through order exceeding $B^{*}$, then $f\equiv0$. Because the
identities are multiplied through to clear denominators before
normalization, the raw expansions checked are integer polynomial
identities; no truncation error is possible.

\subsection{Certificate for \eqref{eq:P1id}}\label{subsec:cert1}
With \eqref{eq:coset} substituted, the identity has nine product terms
at common level $N=300$:
\begin{center}
\begin{tabular}{lr}
\toprule
number of product terms & $9$\\
common level $N$ & $300$\\
number of inequivalent $\Ggp(300)$ cusps & $560$\\
non-infinity valence bound $B^{*}$ & $1920$\\
required vanishing order at $\infty$ & $>1920$\\
exact raw expansion verified through & $q^{2005}$\\
\bottomrule
\end{tabular}
\end{center}
The raw integer expansion vanishes identically through
$q^{2005}>1920$, so the valence criterion proves \eqref{eq:P1id}.

\subsection{Certificate for \eqref{eq:P2id}}\label{subsec:cert2}
Seven product terms at common level $N=100$:
\begin{center}
\begin{tabular}{lr}
\toprule
number of product terms & $7$\\
common level $N$ & $100$\\
number of inequivalent $\Ggp(100)$ cusps & $140$\\
non-infinity valence bound $B^{*}$ & $216$\\
required vanishing order at $\infty$ & $>216$\\
exact raw expansion verified through & $q^{255}$\\
normalized expansion thereby verified through & $q^{250}$\\
\bottomrule
\end{tabular}
\end{center}
Again $250>216$, proving \eqref{eq:P2id}. This completes the proof of
Proposition~\ref{prop:products}, and with it
Theorem~\ref{thm:main}. \qed

\begin{remark}[Reproducibility of the certificates]\label{rem:hashes}
The released artifact stores, for both certificates, the complete
generalized-eta exponent lists and every cusp-order row (all $560+140$
cusps, with widths and exact rational orders of every term). The
SHA-256 hashes of the two stored tables are
\begin{center}\footnotesize\ttfamily
086689d42165a4b3997e44695b126233a59326a8f07eb31823e65d5a2201eacc\\
9f9b794f03e9a6661a1fa49ecd8ee16c9a3df6d42177c1efe06715fdeb49acbb
\end{center}
The cusp and valence formulas are a direct rational-arithmetic port of
those implemented in the \textsf{thetaids} package of Frye and
Garvan~\cite{FryeGarvan19}. As independent consistency checks
(Section~\ref{sec:verification}), the cusp counts $560$ and $140$ were
recomputed from the cusp-counting formula for $\Ggp(N)$, the two bounds
were recomputed directly from the stored tables via \eqref{eq:Bstar},
and the entire certificate-generation pipeline was re-executed from
scratch, reproducing the stored tables bit for bit.
\end{remark}

\section{Machine-learning and reinforcement-learning implementation}
\label{sec:ml}

No background in machine learning is needed for this section. The algorithms
operate on finite mathematical objects rather than on text or images. There
are two kinds of search. In the evolutionary searches, a candidate formula
is encoded by a short tuple of integers, scored by exact algebraic tests, and
then modified by selection and mutation. In the reinforcement-learning
search, a state is the algebraic residual still left to cancel, and an action
is one legal application of the contiguous-cell identity. In both cases the
machine is a search tool; the successful output is checked exactly afterward.

\subsection{Evolutionary recurrence discovery}\label{subsec:evo}
We first explain the word ``evolutionary.'' A candidate recurrence is encoded
by six numbers
\[
  g=(a_0,b_0,\varepsilon_0,a_1,b_1,\varepsilon_1),
\]
which represent
\begin{equation}\label{eq:ml-candidate}
  \FF_{t+2}=\varepsilon_0q^{a_0t+b_0}\FF_t
  +\varepsilon_1q^{a_1t+b_1}\FF_{t+1}.
\end{equation}
Here $\varepsilon_i\in\{\pm1\}$, $|a_i|\le4$, and $|b_i|\le6$, giving
$54{,}756$ possible candidates. The tuple $g$ is sometimes called a
\emph{genome}; in this paper it is simply a compact encoding of one formula.

For a candidate $g$, form its residual
\[
  \mathcal R_g(t;q)
  =\FF_{t+2}-\varepsilon_0q^{a_0t+b_0}\FF_t
   -\varepsilon_1q^{a_1t+b_1}\FF_{t+1}.
\]
The search evaluates exact Laurent coefficients of $\mathcal R_g(t;q)$ for
$-2\le t\le3$ and through $q$-degree $80$. Thus the ``fitness'' of $g$ is
nothing mysterious: it is determined by the exact residual coefficient
vector
\[
  \bigl([q^n]\mathcal R_g(t;q)\bigr)_{t,n},
\]
together with a small penalty favoring simpler formulas. Candidates with
smaller residuals are retained, slightly modified, and tested again. This is
the evolutionary step: \emph{select good candidates, mutate them, and
repeat}. No floating-point approximation of $q$ is used.

The unique exact survivor was
\[
  (-1,0,+1;\,2,1,-1),
\]
which is precisely the recurrence \eqref{eq:rec}. A separate exhaustive sweep
of all $54{,}756$ candidates confirmed that it is the unique exact candidate
in this grammar. At that point the search evidence is no longer needed: the
recurrence is proved independently by the five-cell certificate in
Theorem~\ref{thm:rec}.

\subsection{RL proof synthesis}\label{subsec:rl}
The reinforcement-learning problem is a proof-search problem. Its starting
point is the residual
\[
  R_t=q^{-t}[0]-q^{2t+1}[d]-[2d].
\]
The goal is to turn this expression into $0$ by adding only identities that
are already known to vanish.

A \emph{state} is the current residual, written as a sparse linear
combination
\[
  s=\sum_{v,a,b}c_{v,a,b}\,q^{at+b}[v].
\]
An \emph{action} chooses a coordinate $r$, a shift $v$, and a Laurent
monomial multiplier, and then adds the corresponding cell
\eqref{eq:cell}. Because every cell is an exact zero identity, every legal
action preserves the truth of the target equation. Invalid or non-integral
actions are rejected. The terminal state is $s=0$.

The agent uses standard $Q$-learning. If action $a$ moves the proof from
state $s$ to state $s'$ and receives reward $r$, its table is updated by
\begin{equation}\label{eq:qlearning-update}
  Q(s,a)\leftarrow Q(s,a)+\alpha\Bigl(
  r+\gamma\max_{a'}Q(s',a')-Q(s,a)\Bigr),
\end{equation}
where $\alpha$ is the learning rate and $\gamma$ discounts future reward.
The implementation uses an $\varepsilon$-greedy rule: most of the time it
chooses the currently best-known action, while occasionally it explores a
different legal action. The reward is shaped to favor progress toward a
smaller, simpler residual; its numerical value affects search efficiency but
has no role in the proof.

In plain language, the agent repeatedly asks:
\begin{quote}
Which legal contiguous identity should I add next so that more terms cancel
and the remaining expression becomes easier to finish?
\end{quote}
Across twenty fixed-seed runs, each using $2500$ training episodes,
every learned greedy policy found a five-action proof, equal to
\eqref{eq:certificate} up to reordering, with the first successful
episode between $533$ and $1253$ (mean $828.4$).
Breadth-first search enumerated $298{,}442$ states through depth four
and found no proof of length at most four in the released environment,
whereas a uniformly random policy solved none of $40{,}000$ episodes. Thus RL is useful
for navigating the proof-search tree, while the final five-cell identity is
verified exactly by direct cancellation.

\subsection{Evolutionary ray search}\label{subsec:anchor}
The second evolutionary search looks for asymptotic directions rather than
recurrences. A candidate has the form
\[
  t=ah+b,
  \qquad
  (i,j,k)=(xh+r,\,yh+s,\,zh+u).
\]
After this substitution, the exponent is an exact quadratic polynomial in
the large parameter $h$ and in the residual variables $r,s,u$. A useful ray
should make the large mixed terms cancel, force the unwanted coordinates to
a boundary, leave a positive-definite residual quadratic form, and remain
algebraically simple. These requirements are precisely the features measured
by the fitness function, and they are the computational form of the
complementarity condition \eqref{eq:TF}.

The two winning candidates were
\[
  t=-N,\quad (x,y,z)=(2,1,0),
  \qquad\text{and}\qquad
  t=4h+1,\quad (x,y,z)=(0,0,1).
\]
They are exactly the two rays used in Sections~\ref{sec:forward} and
\ref{sec:backward}. The first gives the interior complementarity solution
$w=0$; the second gives the boundary solution $w=(5,0,0)$ and explains why
one must pass to the arithmetic subfamily $t\equiv1\pmod4$. As before, the
machine only proposes these directions. The final proof substitutes them
into the exponent and verifies the two limits directly via
Theorem~\ref{thm:tropical}.

\subsection{What ML does and does not prove}
\label{subsec:epistemic-ml}
The distinction can be summarized as
\[
  \text{ML proposes}\;\longrightarrow\;\text{RL plans}
  \;\longrightarrow\;\text{exact verification}
\]
The learned algorithms are therefore not theorem oracles. Changing the
fitness function, the random seed, or the learned policy could change how
quickly a useful candidate is found, but it cannot change the validity of a
completed certificate. Conversely, a high machine-learning score is never
accepted as mathematical evidence. Every theorem used in the main argument
is established by the displayed exact identities and certificates.

\section{Computational certification and independent verification}
\label{sec:verification}

\noindent The complete reproducibility artifact is publicly available at
\begin{center}
\url{https://github.com/ankushgoswami3-glitch/nahm-example12-reproducibility}.
\end{center}
\noindent In addition to the command-line Python scripts, notebook versions of the
independent verification routines are provided for interactive reproduction.
The repository README gives the recommended execution order and environment
setup.

\subsection{Primary verifier}
The master script \texttt{verify\_complete\_proof.py} performs:
(1) symbolic collection of the five-cell certificate
\eqref{eq:certificate} in the free module over
$\Z[q^{\pm1},q^{\pm t}]$; (2) exact polynomial verification of the
continuant formulas \eqref{eq:Om} through $m=20$; (3) regeneration of
both generalized-eta certificates from scratch---common-base exponents,
cusp representatives, widths, invariant orders \eqref{eq:cusporder},
and the bounds \eqref{eq:Bstar}; (4) expansion of the two raw product
identities by exact integer convolution beyond their valence bounds
($q^{2005}$ and $q^{255}$); and (5) as a nonlogical sanity check,
direct coefficientwise agreement of the triple sums \eqref{eq:Ftdef}
with the product sides through $q^{150}$ over a certified search box.

\subsection{Independent implementations and reproduction report}
In preparing this manuscript the entire artifact was re-executed
end-to-end in a fresh environment. All deterministic checks and
fixed-seed computational experiments reproduced their stored outputs: the regenerated certificate tables are bit-identical
to the shipped \texttt{complete\_theta\_certificates.json} (matching
both SHA-256 hashes of Remark~\ref{rem:hashes}); the exhaustive grammar
sweep returned the unique survivor $(-1,0,+1;\,2,1,-1)$; the
breadth-first table reproduced
$(27,\,618,\,13386,\,284410)$ new states at depths $1$--$4$ ($298{,}442$
states seen); the twenty-seed RL experiment reproduced every per-seed
first-terminal episode and greedy proof; and the evolutionary ray search
reproduced its stored results file.

In addition, two independently written checkers accompany the release.
\texttt{independent\_checks.py} re-verifies, in exact integer and
rational arithmetic against the statements of this paper rather than
against the primary code: the certificate \eqref{eq:certificate}
symbolically; all exponent identities of
Lemma~\ref{lem:fwd-exp}, Appendix~\ref{app:exponents}, the parity and
coset splits, and the gauge exponents; the recurrence \eqref{eq:rec01}
at $t=0,1$ through $q^{120}$ including the principal part of $\FF_3$;
Theorem~\ref{thm:main} coefficientwise through $q^{120}$; both boundary
equations through $q^{120}$; the decomposition \eqref{eq:coset} through
$q^{200}$; the Slater evaluations through $Q^{60}$; the Schur and
continuant polynomials; the $\Ggp(300)$ and $\Ggp(100)$ cusp counts;
and the recomputation of both valence bounds from the stored cusp
tables (twenty-four checks). \texttt{general\_theory\_checks.py}
validates the general framework itself: the exponent identity
underlying Theorem~\ref{thm:tropical} symbolically for generic
$(A,\beta,\delta,v)$; the complementarity data, partitions, and
normalizers of both application rays, including the inadmissibility of
the step-one backward direction; and---as a test on data unrelated to
Example 12---numerical confirmation of Theorem~\ref{thm:tropical} on a
rank-two family with $A=\bigl(\begin{smallmatrix}2&1\\
1&2\end{smallmatrix}\bigr)$, $m=2$, in both the interior regime
($w=0$) and the boundary regime ($w=(0,2)$), through $q^{30}$ at
$h=25,30$ (seven checks).

We emphasize the epistemic status of each layer: coefficientwise
agreements are consistency checks; the \emph{proof} consists of the
displayed certificate (Theorem~\ref{thm:rec}), the tropical limits
(Theorem~\ref{thm:tropical} with the two displayed instantiations), the
determination principle (Theorem~\ref{thm:determination}), and the two
valence certificates, each finite and exact.

\section{Consequences, generality, and the remaining Shi--Wang targets}
\label{sec:consequences}

\subsection{The full affine family}\label{subsec:family-conseq}

\begin{corollary}\label{cor:family}
For every $t\in\Z$,
$\FF_t\in\Z[q,q^{-1}]\,\FF_0+\Z[q,q^{-1}]\,\FF_1$, with coefficients
generated by
\[
  \begin{pmatrix}\FF_{t+1}\\ \FF_{t+2}\end{pmatrix}
  =\begin{pmatrix}0&1\\ q^{-t}&-q^{2t+1}\end{pmatrix}
  \begin{pmatrix}\FF_{t}\\ \FF_{t+1}\end{pmatrix},
  \qquad
  \begin{pmatrix}\FF_{t-1}\\ \FF_{t}\end{pmatrix}
  =\begin{pmatrix}q^{2t-1}&q^{\,t-1}\\ 1&0\end{pmatrix}
  \begin{pmatrix}\FF_{t}\\ \FF_{t+1}\end{pmatrix}.
\]
In particular $\FF_2=\FF_0-q\FF_1$, and by Theorem~\ref{thm:main} every
$\FF_t$ is an explicit finite $\Z[q,q^{-1}]$-combination of
generalized-eta quotients, hence modular, though the later members need
not be single products or admit three-term dissections.
\end{corollary}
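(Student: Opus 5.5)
The plan is to derive everything from the recurrence of Theorem~\ref{thm:rec}, rewritten as the two one-step transfer matrices displayed in the statement, and then to induct on $t$ in both directions. First I check the matrices. For the forward step, the first row is trivially $\FF_{t+1}=\FF_{t+1}$. The second row needs $\FF_{t+2}=q^{-t}\FF_t-q^{2t+1}\FF_{t+1}$, which is exactly \eqref{eq:rec} solved for $\FF_{t+2}$. For the backward step, the second row is trivial. The first row requires $\FF_{t-1}=q^{2t-1}\FF_t+q^{t-1}\FF_{t+1}$. To get it, I apply \eqref{eq:rec} at $t-1$: this gives $q^{1-t}\FF_{t-1}=q^{2t-1}\FF_t+\FF_{t+1}$, and multiplying by $q^{t-1}$ yields $\FF_{t-1}=q^{3t-2}\FF_t+q^{t-1}\FF_{t+1}$. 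The exponent $3t-2$ does not match the displayed entry $2t-1$. So when writing the proof I will derive the entry from \eqref{eq:rec} itself and use $q^{3t-2}$, noting that the displayed $q^{2t-1}$ should read $q^{3t-2}$. As a check, at $t=0$ this gives $q^{-2}\FF_0+q^{-1}\FF_1$, which agrees with Lemma~\ref{lem:gauge} ($q^2\FF_{-1}=\FF_0+q\FF_1$).

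The key structural point is that both transfer matrices have entries in $\Z[q,q^{-1}]$. No division is ever needed, because \eqref{eq:rec} has unit monomial coefficients on both of its extreme terms, $\FF_t$ and $\FF_{t+2}$. I then induct on the claim that $(\FF_t,\FF_{t+1})^T=M_t(\FF_0,\FF_1)^T$ with $M_t\in\mathrm{Mat}_2(\Z[q^{\pm1}])$. The base case is $M_0=I$. The forward induction multiplies by the forward matrix, and the backward induction multiplies by the backward matrix. This gives $\FF_t\in\Z[q^{\pm1}]\FF_0+\Z[q^{\pm1}]\FF_1$ for every $t\in\Z$. The special case follows from \eqref{eq:rec01}, $\FF_0=q\FF_1+\FF_2$, that is, $\FF_2=\FF_0-q\FF_1$.

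Finally, I substitute the product formulas \eqref{eq:mainF0}--\eqref{eq:mainF1} of Theorem~\ref{thm:main} for $\FF_0$ and $\FF_1$. Each $\FF_t$ then becomes a finite $\Z[q^{\pm1}]$-combination of the six generalized-eta quotients appearing there, each carrying a power of $q$. Every such monomial is modular (up to the $q$-power normalization) on a common group $\Ggp(N)$; the level-$100$ setting of Section~\ref{sec:certificates} suffices. Hence each $\FF_t$ is modular in the same sense as $\FF_0$ and $\FF_1$.

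The closing caveat, that later members need not be single products or three-term dissections, is a remark rather than a claim. I would support it with one example, such as the visible mixed supports of $\FF_2=\FF_0-q\FF_1$ modulo $5$, and not prove it in general. The only real obstacle is the bookkeeping of the backward matrix entry noted above; everything else is formal.
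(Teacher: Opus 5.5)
Your argument is the one the paper intends. The paper gives no separate proof: it iterates \eqref{eq:rec} in both directions, using that the extreme coefficients $1$ and $q^{-t}$ are units of $\Z[q^{\pm1}]$, and then substitutes Theorem~\ref{thm:main}. Your correction of the backward matrix is also right. Applying \eqref{eq:rec} at $t-1$ gives $\FF_{t-1}=q^{3t-2}\FF_t+q^{t-1}\FF_{t+1}$, and the displayed entry $q^{2t-1}$ agrees with this only at $t=1$. At $t=0$ the displayed matrix would give $\FF_{-1}=q^{-1}(\FF_0+\FF_1)$. That contradicts Lemma~\ref{lem:gauge}, and also the leading term $2q^{-2}$ of $\FF_{-1}$, which comes from $(i,j,k)=(2,1,0)$ and $(3,2,0)$.

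The step that fails is your last one. A $\Z[q^{\pm1}]$-combination of modular functions need not be modular, because $q$ itself is not. Here your claim that ``each $\FF_t$ is modular in the same sense as $\FF_0$ and $\FF_1$'' is actually false. Computing offsets with \eqref{eq:geneta}, each of the six summands in \eqref{eq:mainF0}--\eqref{eq:mainF1}, prefactors included, is $q^{-1/8}$ times a weight-zero generalized-eta quotient. So $M_0=q^{1/8}\FF_0$ and $M_1=q^{1/8}\FF_1$ are nonzero modular functions with the same normalization, and $q^{1/8}\FF_2=M_0-qM_1$.

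Suppose $q^{\alpha}(M_0-qM_1)$ were modular for some $\alpha$. Use invariance under a common finite-index subgroup along an orbit $\gamma\tau$, which accumulates at every $x\in\mathbb{R}$. This makes $M_0(\tau)e^{2\pi i\alpha x}-M_1(\tau)e^{2\pi i(\alpha+1)x}$ independent of $x$, which forces $M_0(\tau)M_1(\tau)=0$, a contradiction. So your argument proves only that each $\FF_t$ is a finite sum of terms, each modular up to its own power of $q$; that weak sense is also all that ``hence modular'' in the statement can mean.

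For the same reason, the individual quotients are not modular functions on $\Ggp(100)$, since the factor $q^{1/8}$ is not invariant under $\tau\mapsto\tau+1$. Only the normalized ratios of Section~\ref{sec:certificates} are.

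Finally, your proposed illustration of the caveat does not work. $\FF_0$ and $q\FF_1$ are both supported on exponents $\equiv 0,1,2\pmod 5$, so $\FF_2$ has no mixed support. What changes is that each of its three dissection components is a two-term combination of the six products.
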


This provides infinitely many exact rank-three Nahm-sum evaluations
beyond the two conjectured by Shi and Wang.

\subsection{The recurrence--boundary program in general}
\label{subsec:template}
Theorems~\ref{thm:tropical} and~\ref{thm:determination} give a practical
six-step program for other affine Nahm families: (1) find a low-order
recurrence in the parameter; (2) prove it from contiguous cells; (3) find as
many independent asymptotic rays as the recurrence order; (4) evaluate those
limits as theta combinations; (5) solve the resulting finite linear system;
and (6) certify the remaining generalized-eta identities by valence bounds.
When necessary, Step~(3) may use an arithmetic subfamily. In this way one
hard multisum evaluation is replaced by a recurrence, simpler boundary
limits, and finite modular verification.

\subsection{Remaining Shi--Wang targets}\label{subsec:remaining}
Two blocks of \cite{ShiWang26} remain open and are natural next
applications. For the dual of Example~9, where Shi and Wang prove three
of the four required sets of identities and leave the fourth as a
conjecture, the relevant parameter structure is cyclic rather than
one-dimensional; the natural object is a finite recurrence
\emph{module} for the parity-split sums, i.e.\ a matrix recurrence over
several residue classes, to which
Proposition~\ref{prop:soundness} applies verbatim. For the four product
formulas of their Conjecture~4.1, the parameter vectors lie in a small
affine lattice for a common positive-definite matrix, suggesting a
two-parameter transport system with correspondingly more rays. In both
cases the strategy is: find the recurrence module, solve the
complementarity problem for enough independent rays, evaluate the
lower-rank boundaries by Theorem~\ref{thm:tropical}, and apply
Corollary~\ref{cor:modular-cert}. We stress that this paper supplies the
general theorems and the proof kernel for those targets, but not the
missing exact certificates; we do not claim them proved. There is also a
soft dimension count in their favor: if several conjectural sums lie in
one affine family governed by a low-order recurrence, they cannot be
independent, so even a partial run of the program would reduce the
number of genuinely open identities.



\subsection*{Acknowledgments}
The author acknowledges the limited use of ChatGPT (OpenAI) for editorial refinement and exploratory assistance during the preparation of the manuscript. All mathematical arguments, computations, conclusions, and final decisions were independently verified by the author, who assumes full responsibility for the content.

\appendix

\section{Exact exponent calculations}
\label{app:exponents}

All identities below are verified symbolically in the artifacts.

\subsection{Forward ray}
With $u=(r,s,u)$,
\[
  \tfrac12u^{T}Au+B_0^{T}u
  =\tfrac12\bigl(3r^{2}+7s^{2}+8u^{2}-8rs-6ru+8su\bigr)
  +\tfrac{r}{2}-\tfrac{3s}{2}+2u
  =E_-(r,s,u)
\]
as in \eqref{eq:Eminus}. (Equivalently, with the classical
normalization $H_N=q^{N(3N+1)/2}\FF_{-N}$ and the substitution
$i=2N+r$, $j=N+s$, $k=u$, every monomial containing $N$ cancels; the
tropical normalizer of Section~\ref{sec:forward} packages this
cancellation as $v^{T}w=0$.) For the separation
\eqref{eq:separation}, substitute $r=R+u$, $s=S$ into $2E_-$: the
$u^{2}$-coefficients combine as $3-6+8=5$, the $u$-coefficients as
$1+4=5$, and the mixed $Ru$-, $Su$-coefficients as $6-6=0$ and
$-8+8=0$, giving $3R^{2}-8RS+R+7S^{2}-3S+5u^{2}+5u$.

\subsection{Backward ray}
With $u=(0,j,u)$ and $\beta=B_1=(\tfrac52,-\tfrac52,0)^{T}$,
\[
  \tfrac12u^{T}Au+B_1^{T}u
  =\tfrac12\bigl(7j^{2}+8ju+8u^{2}\bigr)-\tfrac{5j}{2}
  =\frac{7j^{2}+8ju-5j+8u^{2}}{2},
\]
and the offset $q^{1}$ from \eqref{eq:Om-unwind} produces the face
exponent $(7j^{2}+8ju-5j+8u^{2}+2)/2$ of
Proposition~\ref{prop:bwd}. (Equivalently, with $t=4h+1$,
normalization $q^{4h^{2}+1}$, and $k=h+u$, all monomials containing $h$
cancel except $5hi=h\,w_1u_1$.) For the parity split with $u=v-r$:
\[
  \tfrac12\bigl(7(2r)^{2}+8(2r)(v{-}r)-5(2r)+8(v{-}r)^{2}+2\bigr)
  =5r(2r-1)+(2v)^{2}+1,
\]
\[
  \tfrac12\bigl(7(2r{+}1)^{2}+8(2r{+}1)(v{-}r)-5(2r{+}1)+8(v{-}r)^{2}+2\bigr)
  =5r(2r+1)+(2v+1)^{2}+1,
\]
both by direct expansion.

\subsection{Coset decomposition}
With $x=3R-4S$: $9(3R^{2}-8RS+7S^{2})=3x^{2}+15S^{2}$ and
$3(R-3S)=x-5S$, so the exponent of \eqref{eq:ThetaM} is
$(x^{2}+x+5S^{2}-5S)/6$, and the cosets $x=3u-c$, $S=3v+c$,
$c\in\{0,1,2\}$, give the three quadratic pairs displayed in the proof
of Proposition~\ref{prop:coset}.

\section{Common-base conversion used in the certificates}
\label{app:base}

For a fixed common level $N$, every Pochhammer factor
$(q^{a};q^{m})_\infty$ with $m\mid N$ and $0<a\le m$ is expanded into
its residue classes modulo $N$:
\[
  (q^{a};q^{m})_\infty
  =\prod_{\substack{0<b\le N\\ b\equiv a\ (\mathrm{mod}\ m)}}
  (q^{b};q^{N})_\infty .
\]
Collecting residues symmetrically ($b\leftrightarrow N-b$) converts any
term of \eqref{eq:P1id}--\eqref{eq:P2id} uniquely to the form
$q^{\delta}\,J_N^{\,e_0}\prod_{1\le r\le N/2}J_{r,N}^{\,e_r}$, and
thence to a generalized-eta quotient via \eqref{eq:geneta}. In every
normalized ratio occurring in the two certificates the total
Dedekind-eta exponent contributes weight zero, so the ratio is a
weight-zero modular function on $\Ggp(N)$. The conversion is exact at
the level of products; no coefficient matching is used to infer it.

\section{Artifact inventory}
\label{app:artifacts}

The release accompanying this paper contains:
\begin{itemize}
\item \texttt{verify\_complete\_proof.py}: master verifier
  (Section~\ref{sec:verification});
\item \texttt{verify\_main\_certificate.py}: exact five-cell recurrence
  certificate;
\item \texttt{prove\_theta\_identities.py}: exact product arithmetic and
  generalized-eta cusp engine;
\item \texttt{generate\_certificates.py}: regenerates both valence
  certificates from scratch;
\item \texttt{complete\_theta\_certificates.json}: all generalized-eta
  exponent lists, cusp widths, and the $560+140$ exact cusp-order rows,
  with the SHA-256 hashes of Remark~\ref{rem:hashes};
\item \texttt{independent\_checks.py},
  \texttt{general\_theory\_checks.py}, together with their
  \texttt{.ipynb} notebook versions: independently written second
  verifiers (twenty-four and seven checks respectively;
  Section~\ref{sec:verification});
\item \texttt{evolutionary\_recurrence\_discovery.py},
  \texttt{exhaustive\_recurrence\_grammar.py}: recurrence search and the
  $54{,}756$-candidate uniqueness sweep;
\item \texttt{rl\_proof\_synthesis.py}, \texttt{run\_rl\_experiment.py},
  \texttt{bfs\_minimality.py}, \texttt{rl\_results.csv},
  \texttt{bfs\_counts.csv}: the $Q$-learning proof environment, the
  twenty-seed experiment with random baseline, and the depth-four
  minimality search with its state counts;
\item \texttt{ml\_asymptotic\_anchor\_search.py},
  \texttt{ml\_anchor\_results.json}: evolutionary ray search with full
  trace;
  \item \texttt{README.md} and \texttt{requirements.txt}: execution
  instructions and computational environment requirements;
\item \texttt{SHA256SUMS.txt}: SHA-256 checksums for the released
  computational files.
\end{itemize}


\end{document}